\def\ARXIV{1}
\tikzset{%
  every neuron/.style={
    circle,
    draw,
    minimum size=0.5cm
  },
  neuron missing/.style={
    draw=none, 
    scale=2,
    text height=0.333cm,
    execute at begin node=\color{black}$\vdots$
  },
}
\newtheorem{theorem}{Theorem}
\newtheorem{lemma}{Lemma}
\newtheorem{remark}{Remark}
\newtheorem{assumption}{Assumption}
\newcommand{\norm}[1]{\left\|#1\right\|}
\newcommand{\normB}[1]{\left\|#1\right\|_\Bc}
\newcommand{\normK}[1]{\left\|#1\,\right\|_\Kc}
\newcommand{\abs}[1]{\left|#1\right|}
\newcommand*{\medcap}{\mathbin{\scalebox{1.5}{\ensuremath{\cap}}}} 
\newcommand{\E}{\mathbb{E}}
\newcommand{\N}{\mathbb{N}}
\renewcommand{\P}{\mathbb{P}}
\newcommand{\R}{\mathbb{R}}
\DeclareMathOperator{\Exp}{\E}
\newcommand{\fb}{\bm{f}}
\newcommand{\vb}{\bm{v}}
\newcommand{\upgammab}{{\bm{\upgamma}}}
\newcommand{\Thetah}{{\widehat{\Theta}}}
\newcommand{\Bc}{\mathcal{B}}
\newcommand{\Dc}{\mathcal{D}}
\newcommand{\Fc}{\mathcal{F}}
\newcommand{\Hc}{\mathcal{H}}
\newcommand{\Ic}{\mathcal{I}}
\newcommand{\Lc}{\mathcal{L}}
\newcommand{\Kc}{\mathcal{K}}
\newcommand{\Rc}{\mathcal{R}}
\newcommand{\Sc}{\mathcal{S}}
\newcommand{\Vc}{\mathcal{V}}
\newcommand{\as}{\mathscr{a}}
\newcommand{\bs}{\mathscr{b}}
\newcommand{\dscr}{\mathscr{d}}
\newcommand{\fs}{\mathscr{f}}
\newcommand{\gs}{\mathscr{g}}
\newcommand{\ps}{\mathscr{P}}
\newcommand{\Bs}{\mathscr{B}}
\newcommand{\Cs}{\mathscr{C}}
\newcommand{\Ds}{\mathscr{D}}
\newcommand{\Fs}{\mathscr{F}}
\newcommand{\fw}{\widetilde{f}}
\newcommand{\uw}{\widetilde{u}}
\newcommand{\Lcw}{\widetilde{\Lc}}
\newcommand{\upgammaw}{\widetilde{\upgamma}}
\newcommand{\xh}{\widehat{x}}
\renewcommand{\d}{\text{d}} 
\newcommand{\dx}{\text{d}x} 
\newcommand{\Ind}{\mathbb{I}}
\newcommand{\sstep}{\sigma_{c\Sc}}
\newcommand{\relu}{\sigma_\Rc}
\newcommand{\fch}{\widecheck{f}}
\newcommand{\sigmach}{\widecheck{\sigma}}
\newcommand{\fh}{\widehat{f}}
\newcommand{\fst}{f^\star}
\newcommand{\fbN}{\bar{f}_N}
\newcommand{\fbbN}{\bar{\fb}_N}
\newcommand{\fhN}{\widehat{f}_N}
\newcommand{\fhR}{\widehat{\Fs}_R}
\newcommand{\fhlN}{\widehat{f}_{\lambda,N}}
\newcommand{\ftr}{\widetilde{f}_R}
\newcommand{\deltano}{\delta^{n+1}}
\newcommand{\deltal}{\hat{\delta}_\lambda}
\newcommand{\deltalno}{\hat{\delta}_\lambda^{n+1}}
\newcommand{\EB}{E_\Bc}
\newcommand{\pmin}{P_{\lambda\!\min}}
\newcommand{\upmax}{\upgamma_{\!\lambda\!\max}}
\newcommand{\Upsilonl}{{\Upsilon_{\!\lambda}}}
\newcommand{\Upsilonil}{{\Upsilon_{i,\lambda}}}
\newcommand{\Dt}{D_{\Upsilon\!\lambda}}
\newcommand{\gl}{\gs_\lambda}
\newcommand{\Dsig}{\Ds_\sigma}
\newcommand{\Lsig}{\Lc_\sigma}
\newcommand{\Pl}{P_\lambda}
\newcommand{\parX}{\upgamma^\top\!X}
\newcommand{\pariX}{\upgamma_i^\top\!X}
\newcommand{\pariY}{\upgamma_i^\top\!Y}
\newcommand{\parjX}{\upgamma_j^\top\!X}
\newcommand{\parbjX}{\upgammab_j^\top\!X}
\newcommand{\Cgl}{\gs_{\lambda\!\max}}
\newcommand{\KD}{K_{\Dsig}\!}
\newcommand{\KL}{K_{\Lsig}\!}
\newcommand{\epsbase}{\varepsilon_{\!N}}
\newcommand{\muB}{\bar\mu_\Bc}
\newcommand{\mubKc}{\bar\mu_\Kc}
\newcommand{\mubKcdx}{\bar\mu_\Kc(\dx)}
\newcommand{\muBdx}{\bar\mu_\Bc(\dx)}
\begin{document}

\title{\LARGE \bf
  Function Approximation with Randomly Initialized Neural Networks for Approximate Model 
  Reference Adaptive Control
}

\author{Tyler Lekang and Andrew Lamperski
      \thanks{T. Lekang and A. Lamperski are with the department of Electrical and
        Computer Engineering, University of Minnesota, Minneapolis,
        MN 55455, USA 
        {\tt\small lekang@umn.edu, alampers@umn.edu}}
}

\maketitle
\thispagestyle{empty}
\pagestyle{empty}

\begin{abstract}
Classical results in neural network approximation theory show how arbitrary continuous functions 
can be approximated by networks 
with a single hidden layer, under mild assumptions on the activation function. However, the 
classical theory does not give a constructive means to generate the network parameters that 
achieve a desired accuracy. Recent results have demonstrated that for specialized activation 
functions, such as ReLUs, high accuracy can be achieved via linear combinations of 
\textit{randomly initialized} activations. 
These recent works utilize specialized integral representations of target functions that depend 
on the specific activation functions used. 
This paper defines \textit{mollified integral representations}, which provide a means to form 
integral representations of target functions using activations for which no direct integral 
representation is currently known.
The new construction enables approximation guarantees for randomly initialized networks using 
any activation for which there exists an established base approximation which may not be 
constructive.
We extend the results to the supremum norm and show how this enables application to an extended, 
approximate version of (linear) model reference adaptive control.
\end{abstract}

\section{Introduction}

Adaptive control lies at the intersection of machine learning methods and the control of 
dynamics systems \cite{gaudio2019connections}, and has considerable development in the controls 
literature \cite{slotine1991applied,sastry1989adaptive,khalil2017high}. Deep neural network 
based machine learning approaches have achieved unprecedented levels of performance and utility
in areas such as language processing \cite{otter2020survey}, medical computer vision
\cite{esteva2021deep}, and reinforcement learning \cite{degrave2022magnetic}. In this paper, we
aim to contribute novel ideas to function approximation techniques and apply them to an 
approximate extension of Model Reference Adaptive Control (MRAC) established in 
\cite{lavretsky2013robust}.

A key challenge in function approximation work is that many theorems describe the existence of 
approximations that are linear combinations of some activation function (such as a sigmoid), but 
are not constructive. In \cite{pinkus1999approximation}, many of these results are reviewed, 
including classic results on the universal approximation properties of continuous 
sigmoidal activations 
\cite{cybenko1989approximation,hornik1989multilayer,funahashi1989approximate}, as well as 
another classic result by \cite{barron1993universal} for arbitrary sigmoidal
activations, which was expanded to arbitrary hinge functions by \cite{breiman1993hinging}.

Random approximation provides a key way to avoid the non-constructiveness by randomly sampling 
the internal parameters of the activations from a known set, if the target function of interest 
has an integral representation. However, determining these representations for general classes 
of target functions and activations is a nontrivial task.
\cite{irie1988capabilities} gives a constructive
method, but only for target and activation functions in $L_1$. In \cite{kainen2010integral} and 
\cite{petrosyan2020neural}, they propose constructive
methods for a class of target functions with unit step and ReLU activations respectively. In 
\cite{hsu2021approximation}, functions are approximated using trigonometric polynomial ridge 
functions, which can then be shown in expectation to be equivalent to randomly initialized ReLU 
activations.

There are several interesting and important results in the literature having to do with
neural networks with random (typically Gaussian) parameter initialization, sometimes as a 
consequence of using randomly initialized gradient descent for training the network. A classic
result by \cite{neal1994priors} shows that the output of a single hidden-layer network with
Gaussian randomly initialized parameters goes to a Gaussian Process as the width goes to
infinity. Similar results are then achieved in \cite{lee2017deep} for deep fully-connected
networks as all hidden layer widths go to infinity. Also for deep fully-connected networks, in
\cite{jacot2018neural} the authors define the Neural Tangent Kernel and propose that its limit,
as the hidden layer widths go to infinity, can be used to study the timestep evolution and
dynamics of the parameters, and the corresponding network output function, in gradient descent.
In \cite{poole2016exponential}, the authors show that single hidden-layer networks cannot achieve
the same rates of increase in a measure of curvature produced by the network output, as deep
networks can, with parameters Gaussian randomly initialized and bounded activation functions. In
\cite{arora2019fine}, the authors show that single hidden-layer networks of a sufficient width
can use Gaussian randomly initialized gradient descent on values of a target function, and
achieve guaranteed generalization to the entire function. And in \cite{du2019gradient}, the
authors show that deep fully-connected networks, where each hidden layer width meets a
sufficient size, can be trained with Gaussian randomly initialized gradient descent and be
guaranteed to reach the global minimum at a linear rate.

Our primary contributions in this paper are: developing a novel method for bridging convex 
combinations of activation functions (eg. step and ReLU) with unknown parameters to
approximations using randomly initialized parameters by using a mollified integral 
representation, obtaining a main overall result bounding the approximation error for the random 
approximations in the $L_2$ norm, extending these results to the $L_\infty$ norm under certain 
conditions, and then applying those results to an approximate extension of MRAC adaptive control.

The organization of the remaining parts of the paper are as follows. Section~\ref{sec:notation}
presents preliminary notation. Section~\ref{sec:apx:background} presents background on function 
approximation with neural networks. Section~\ref{sec:apx:mollApx} presents theoretical results 
on 
mollified approximations, while Section~\ref{sec:apx:random} gives an overall approximation 
error 
bound in $L_2$ norm for randomly initialized approximations. Section~\ref{sec:apx:application}
presents an extension to $L_\infty$ norm bounds and an application on approximate MRAC, and we 
provide closing remarks in Section~\ref{sec:conclusion}.

\section{Notation}
\label{sec:notation}

We use $\R,\N$ to denote the real and natural numbers. $\Ind$ denotes the indicator 
function, while $\Exp$ denotes the expected value. We use $\Bc,\Kc$ to denote bounded and 
compact convex subsets of $\R^n$. The integer set $\{1,\dots,k\}$ is denoted by $[k]$. 
$B_x(r)\subset\R^n$ denotes the radius $r$ Euclidean ball centered at $x\in\R^n$.
We interpret $w,x\in\R^n$ as column vectors and denote their inner product as $w^\top\!x$.
Similarly, we denote the product of matrix $W\in\R^{n\times N}$ with vector $x$ as $W^\top\!x$,
which is a length $N$ vector where the $i$th element is the inner product $W_i^\top\!x$.
Subscripts on vectors and matrices denote the row index, for example $W_i^\top$ is the $i$th row
of $W^\top$. The standard Euclidean norm is denoted
$\|w\|$.
We use subscripts on
time-dependent variables to reduce parentheses, for example $\phi(x(t))$ is instead denoted
$\phi(x_t)$. The $i$th row of a time-varying vector or matrix is then $x_{t,i}$.

\section{Background on Function Approximation with Neural Networks}
\label{sec:apx:background}
In this paper, we consider the approximation of target functions $f:\R^n\to\R$ that are
continuous on a bounded set $\Bc\subset\R^n$ containing the origin, which we denote as $f\in 
\Cs(\Bc)$.

Formally, for a given target function $f\in \Cs(\Bc)$, positive integer $N\in\N$, and positive 
scalar $\epsbase>0$ (which may depend on $N$) there exists some vector $\theta\in\R^N$ and 
nonlinear, continuous almost everywhere basis functions $\Bs_1,\dots,\Bs_N:\R^n\to\R$ such that 
the approximation
\begin{equation}\label{eq:apx:genApx}
\fh(x) \ := \ \sum_{i=1}^N \theta_i\,\Bs_i(x)
\end{equation}
satisfies
\begin{equation}\label{eq:apx:normBound}
\normB{\fch(x)-\fh(x)} \ \leq \ \epsbase \ \ .
\end{equation}
Here, we denote the the $L_2(\Bc,\muB)$ norm
$$
\normB{\fch(x)} \ := \ \left(\int_B|\fch(x)|^2\,\muBdx\right)^{\frac{1}{2}}
$$
with $\muB$ as the uniform probability measure over $\Bc$ such that $\int_\Bc\muBdx=1$, and some
\textit{affine shift} $\fch(x)=f(x)-\mathscr{a}^\top\!x -\mathscr{b}$, for some 
$\mathscr{a}\in\R^n$ and $\mathscr{b}\in\R$.

We will assume that the basis function is parameterized $\Bs(x\,;\upgamma)$ and is a 
composition of some nonlinear, continuous almost everywhere function $\sigma:\R\to\R$ together
with an affine transformation of $\R^n$ into $\R$ according to the \textit{parameter} 
$\upgamma\in\Upsilon$. We define each point $\upgamma:=[w^\top\ b]^\top=[w_1\ \dots\ w_n\ 
b]^\top$ with $w\in\R^n\setminus\{0_n\}$ as the \textit{weight vector} and $b\in\R$ as the
\textit{bias term}. We assume the \textit{parameter set} $\Upsilon$ is a bounded subset of 
$\R^n\setminus\{0_n\}\times\R$. And so, we set $\Bs(x\,;\upgamma_i)=\sigma(w_i^\top 
x+b_i)=\sigma(\pariX)$, where 
$X:=[x_1\ \cdots\ x_n\ 1]^\top$ and we denote $\upgamma_i=[w_i^\top\ b_i]^\top=[w_{i,1}\
\dots\ w_{i,n}\ b_i]^\top$, for all $i\in[N]$. 

We will use the term \textit{base approximation} to define, for some vector $\theta\in\R^N$ and
\underline{fixed} parameters $\upgamma_1,\dots,\upgamma_N\in\Upsilon$, for any integer $N\in\N$,
the sum
\begin{equation}\label{eq:apx:baseApx}
\fhN(x) = \sum_{i=1}^{N} \theta_i\,\sigma(\pariX)
\ \ ,
\end{equation}
and \textit{random approximation} to define, for
some vector $\vartheta\in\R^R$ and \textit{randomly sampled} parameters
$\upgammab_1,\dots,\upgammab_R\in\Upsilon$, for any integer $R\in\N$, the sum
\begin{equation}\label{eq:apx:rndApx}
\fhR(x\,;\upgammab_1,\dots,\upgammab_R) = \sum_{j=1}^{R}\vartheta_j\,\sigma(\parbjX)
\ \ .
\end{equation}

Note that these linear combinations are equivalent to a feedforward 
neural network with a single hidden layer, as depicted in Figure~\ref{fig:apx:SHLNN}, wherw 
$x_0=1$. Hence why we refer to $\sigma$ as an \textit{activation function}, as the output of 
each neuron is the activation of the weighted inputs (plus bias term) to the neuron. We will 
assume that the activation function $\sigma$ is bounded (growth)
as follows.

\begin{figure}
\centering
\begin{tikzpicture}[x=1.2cm, y=0.75cm, >=stealth]

\foreach \m/\l [count=\y] in {1,2,missing,3}
  \node [every neuron/.try, neuron \m/.try] (input-\m) at (0,1.5-\y) {};

\foreach \m [count=\y] in {1,missing,2}
  \node [every neuron/.try, neuron \m/.try ] (hidden-\m) at (2,2.5-\y*1.75) {};

\foreach \m [count=\y] in {1}
  \node [every neuron/.try, neuron \m/.try ] (output-\m) at (4,0.1-\y) {};


\foreach \l [count=\i] in {0,1,n}
  \draw [<-] (input-\i) -- ++(-1,0)
    node [above, midway] {$x_\l$};

\foreach \l [count=\i] in {1,n}
  \node [above] at (hidden-\i.north) {$\sigma$};

\foreach \l [count=\i] in {1}
  \draw [->] (output-\i) -- ++(1,0)
    node [above, midway] {$\fh(x)$};

\foreach \i in {1,...,3}
  \foreach \j in {1,...,2}
    \draw [->] (input-\i) -- (hidden-\j);

\foreach \i in {1,...,2}
  \foreach \j in {1}
    \draw [->] (hidden-\i) -- (output-\j);


\node[] at (0.9,0.9) {$b_1$};
\node[] at (1.5,-1.4) {$b_N$};
\node[] at (0.8,0.3) {$w_{1,1}$};
\node[] at (1.0,-0.3) {$w_{1,n}$};
\node[] at (0.9,-2.0) {$w_{N,1}$};
\node[] at (1,-2.9) {$w_{N,n}$};
\node[] at (3,0.2) {$\theta_1$};
\node[] at (3,-2.3) {$\theta_N$};

\end{tikzpicture}
\caption{\label{fig:apx:SHLNN} Single hidden-layer neural network with $\sigma$ activation 
function.}
\end{figure}
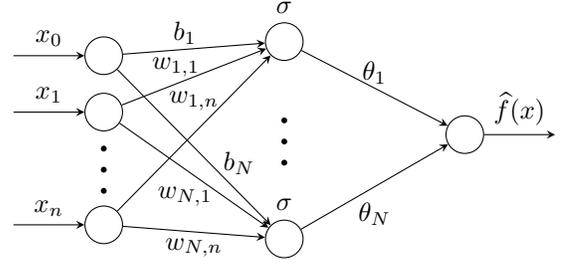

\begin{assumption}\label{sigAsmpt}
\textit{
The nonlinear, continuous almost everywhere activation function $\sigma:\R\to\R$ is bounded 
(growth) over all feasible inputs, such that either of}
\begin{enumerate}
\item $|\sigma(u) - \sigma(v)| \ \leq \ \Dsig$
\item $|\sigma(u) - \sigma(v)| \ \leq \ \Lsig\,|u-v|$
\end{enumerate}
\textit{
hold for all $u,v\in\Ic:=\{\parX \ | \ (\upgamma,X)\in\Upsilon\times\Bc\times\{1\}\}$,
with positive scalar $\Dsig$ or $\Lsig$.}
\end{assumption}

If we shift the activation to the origin as $\sigmach(y):=\sigma(y)-\sigma(0)$, this maintains 
the assumption with the same constant in either case.
The (scaled) step function $\sstep$ and the ReLU function $\relu$, defined as
\begin{align}
\sstep(y) &= 
\begin{cases*}\label{eq:pe:step}
0 \quad\text{if } y\leq0 \\
c \quad\text{if } y>0
\end{cases*}
\qquad \text{(scaled step)} \\
\relu(y) &= 
\begin{cases*}\label{eq:pe:ReLU}
0 \quad\text{if } y\leq0 \\
y \quad\text{if } y>0
\end{cases*}
\qquad \text{(ReLU)} \ \ ,
\end{align}
are examples meeting Assumption~\ref{sigAsmpt}, with $\Dsig=c$ and $\Lsig=1$ respectively,
where $c>0$ is an arbitrary positive scalar.

\section{Mollified Approximations}
\label{sec:apx:mollApx}
In this section, we define the \textit{mollified approximation} $\fhlN$ using a novel method of 
forming a \textit{mollified integral representation} for a base approximation $\fhN$ to a 
target function $f\in\Cs(\Bc)$, and bound their difference in the $L_2(\Bc,\muB)$ norm.

\subsection{Functions with Integral Representation}
If a target function $f:\R^n\to\R$ has an \textit{integral representation} over the parameters
set $\Upsilon$ with some activation function $\sigma:\R\to\R$, then
\begin{equation}\label{eq:apx:intRep}
f(x) \ = \ \int_\Upsilon \gs(\upgamma)\,\sigma(\parX)\,\d\upgamma
\end{equation}
holds with some bounded, continuous \textit{coefficient function} \mbox{$\gs:\Upsilon\to\R$}. 
This is a version of \eqref{eq:apx:baseApx} that is continuous in the parameters $\upgamma_i$ 
over $\Upsilon$. Indeed, if we define $\upgamma_1,\dots,\upgamma_N$ to be a sequence of points 
that uniformly grids $\Upsilon$ with volume $\d_N$ between any neighboring points, then as
$N\to\infty$ (and $\d_N\to0$) we have
$$
\lim\limits_{N\to\infty}\sum_{i=1}^{N}\gs(\upgamma_i)\,\sigma(\pariX)\,\d_N 
= \int_\Upsilon \gs(\upgamma)\,\sigma(\parX)\,\d\upgamma = f(x)
\ .
$$
This implies that the (infinite) sum of the absolute values of the coefficients is finite, with
$$
\lim\limits_{N\to\infty}\sum_{i=1}^{N}|\gs(\upgamma_i)|\,\d_N \ 
= \ \int_\Upsilon |\gs(\upgamma)|\,\d\upgamma \ < \ \infty
$$
holding since $\gs$ is bounded and the integral is over the bounded set $\Upsilon$.

However, finding such integral representations \eqref{eq:apx:intRep} for general classes of
functions $f$ and activation functions $\sigma$ is a nontrivial task. If \textit{both}
are $L_1$ (absolutely integrable over $\R^n$ and $\R$ respectively), then Theorem 1 in
\cite{irie1988capabilities} shows that the integral representation can always be constructed
with $\gs$ related to the Fourier transforms of $f$ and $\sigma$. But this is a limiting
restriction, since none of the typical activation functions like sigmoids, steps, ReLU's, and 
their variants meet this requirement. Even so, in \cite{funahashi1989approximate} this was used 
to prove the universal approximation properties of sigmoid activations.

\subsection{Mollified Integral Representation}
\label{sec:apx:moll}
Now we will propose a novel method of forming an integral representation for
\textit{any} function $f\in\Cs(\Bc)$, so long as a base approximation $\fhN$ of the form 
\eqref{eq:apx:baseApx} exists and the bounded parameter set $\Upsilon\subset\R^{n+1}$ and a
bound $\Sc_N$ on the coefficient size $\sum_{i=1}^N|\theta_i|$ are known.

For any fixed parameters $\upgamma_1,\dots,\upgamma_N\in\Upsilon$, this approximation $\fhN$ 
can be defined as
\begin{align}\nonumber
\fhN(x) \ =& \  \sum_{i=1}^{N} \theta_i\,\sigma(\pariX) \\\label{eq:apx:baseIntRep}
=& \ 
\int_\Upsilon \,\sum_{i=1}^N\theta_i\,\deltano(\upgamma-\upgamma_i)\ \sigma(\parX)\,\d\upgamma \ 
\ ,
\end{align}
where $\deltano$ is the $(n+1)$-dimensional Dirac delta which satisfies for any
$\upgamma_i\in\Upsilon$ that
$\int_\Upsilon\deltano(\upgamma-\upgamma_i)\,\sigma(\parX)\,\d\upgamma =\sigma(\pariX)\ $.

Thus, the RHS of \eqref{eq:apx:baseIntRep} is like an integral representation 
\eqref{eq:apx:intRep} of $\fhN$ with a coefficient mapping \mbox{$\gs(\upgamma) = \sum_{i=1}^N 
\theta_i\,\deltano(\upgamma-\upgamma_i)$}.
However, this mapping is not continuous and bounded as required, and the Dirac delta is not even
a function in the regular sense. Indeed, if the overall coefficients of a random approximation 
$\fhR$ were set as $\vartheta_j=\gs(\upgammab_j)$ for each randomly sampled 
$\upgammab_1,\dots,\upgammab_R\in\Upsilon$, then in this case they almost surely would all be 
zero.

We propose the \textit{Mollified integral representation} to solve this issue, where the Dirac
delta in \eqref{eq:apx:baseIntRep} is replaced with the \textit{mollified} delta function
$\deltalno$. Recalling that $\upgamma=[w_1\ \cdots\ w_n\ b\,]$, this is defined as the 
$(n+1)$-dimensional bump function
\begin{align}\label{eq:apx:mollDel}
&\deltalno(\upgamma) \ := \ \\\nonumber
&\begin{cases}
(\eta\lambda)^{n+1}\prod_{i=1}^n
\exp\left(\dfrac{-1}{1-\lambda^2w_i^2}\right)
\exp\left(\dfrac{-1}{1-\lambda^2b^2}\right)\\[8pt]
\hspace{160pt} \upgamma\in(-\frac{1}{\lambda},\frac{1}{\lambda})^{n+1} \\
0 \hspace{155pt} \text{otherwise}
\end{cases}
\end{align}
with $\eta^{-1}=\int_{-1}^{1}\exp\left(\frac{-1}{1-y^2}\right)\d y \approx0.444$ thus giving
$\int\deltalno(\upgamma-\upgamma_i)\,\d\upgamma = \int\deltano(\upgamma-\upgamma_i)\,\d\upgamma
= 1$ for any $n\geq1$ and $\upgamma_i\in\R^{n+1}$.

We term the positive scalar $\lambda>0$ as the \textit{mollification factor}. This controls the
height and width of the \mbox{mollified} delta $\deltalno$. Since the height scales as
$\lambda^{n+1}$ and the integral remains constant, this requires the width to shrink. 
Figure~\ref{fig:apx:mollFactor} plots the 1-dimensional case for different $\lambda$ values.
\begin{figure}
\centering
\includegraphics[width=\columnwidth]{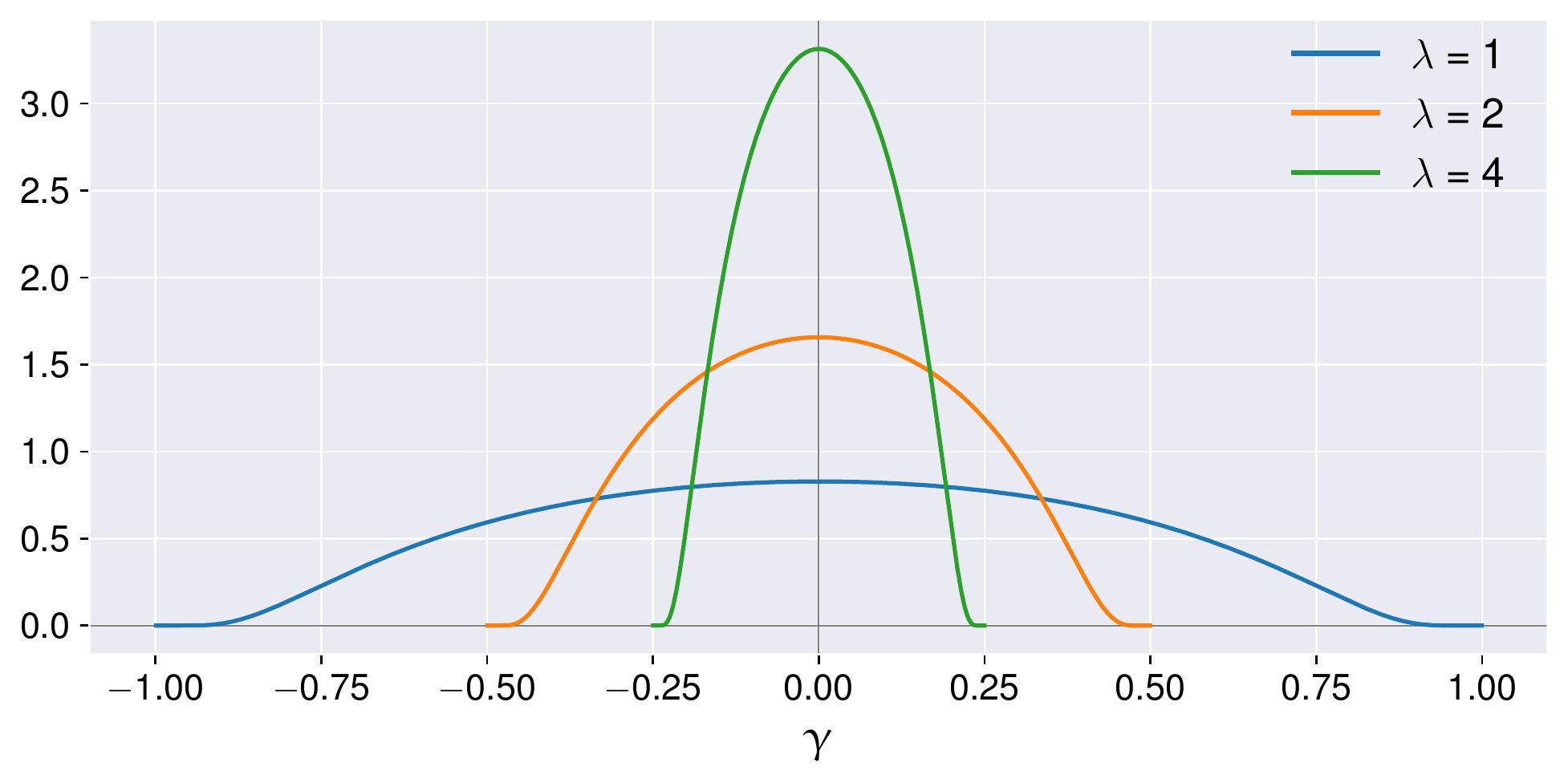}
\caption{\label{fig:apx:mollFactor}The mollified delta $\deltal$ for different $\lambda$ values.
In each case, $\int_{-1/\lambda}^{1/\lambda}\deltal(\upgamma)\d\upgamma=1$.}
\end{figure}

The support of $\deltalno(\upgamma-\upgamma_i)$ is the $(n+1)$-dimensional open cube set
$(-\frac{1}{\lambda},\frac{1}{\lambda})^{n+1}$ centered about any $\upgamma_i\in\Upsilon$. And
so, if $\upgamma_i$ is sufficiently close to (or on) the boundary of $\Upsilon$, then it no
longer holds that the entire support of $\deltalno(\upgamma-\upgamma_i)$ is within $\Upsilon$.
Thus, we define the \textit{$\lambda$-expanded} parameters set $\Upsilonl$ such that the support
is always fully contained, up to and include the boundary of $\Upsilon$. For example, if
$\Upsilon$ is rectangular product of intervals $[\alpha_1,\beta_1]\times\cdots\times
[\alpha_n,\beta_n]$, then each interval is expanded as 
$[\alpha_i-\frac{1}{\lambda},\beta_i+\frac{1}{\lambda}]$ to obtain $\Upsilonl$.
With this $\lambda$-expanded parameters set, we always maintain that
$\int_\Upsilonl\deltalno(\upgamma-\upgamma_i)\d\upgamma=1$ for all $\lambda>0$, $n\geq1$, and any
$\upgamma_i\in\Upsilon$.

Since $\Upsilon\subset\Upsilonl$ for all $\lambda>0$, then \eqref{eq:apx:baseIntRep} is 
equivalently
\begin{equation}\label{eq:apx:baseIntRepLam}
\fhN(x) = \int_\Upsilonl \,\sum_{i=1}^N\theta_i\,\deltano(\upgamma-\upgamma_i)\ 
\sigma(\parX)\,\d\upgamma \ \ .
\end{equation}
And so, by replacing the Dirac delta in \eqref{eq:apx:baseIntRepLam} with the mollified delta
$\deltalno$, for any choice of $\lambda>0$, we obtain the corresponding \textit{mollified 
approximation}
\begin{equation}\label{eq:apx:mollApx}
\fhlN(x) := \int_\Upsilonl \,\sum_{i=1}^N\theta_i\,\deltalno(\upgamma-\upgamma_i)\ 
\sigma(\parX)\,\d\upgamma
\end{equation}
using a \textit{mollified integral representation}.
This now gives the required continuous, bounded coefficient mapping for any fixed
$\upgamma_1,\dots,\upgamma_N\in\Upsilon$ as
\begin{equation}\label{eq:apx:gLam}
\gl(\upgamma) := \sum_{i=1}^N\theta_i\,\deltalno(\upgamma-\upgamma_i)
\ \ .
\end{equation}
We illustrate the transformation for the 1-dimensional case conceptually in 
Figure~\ref{fig:apx:moll}.
\begin{figure}
\centering
\includegraphics[width=.9\columnwidth]{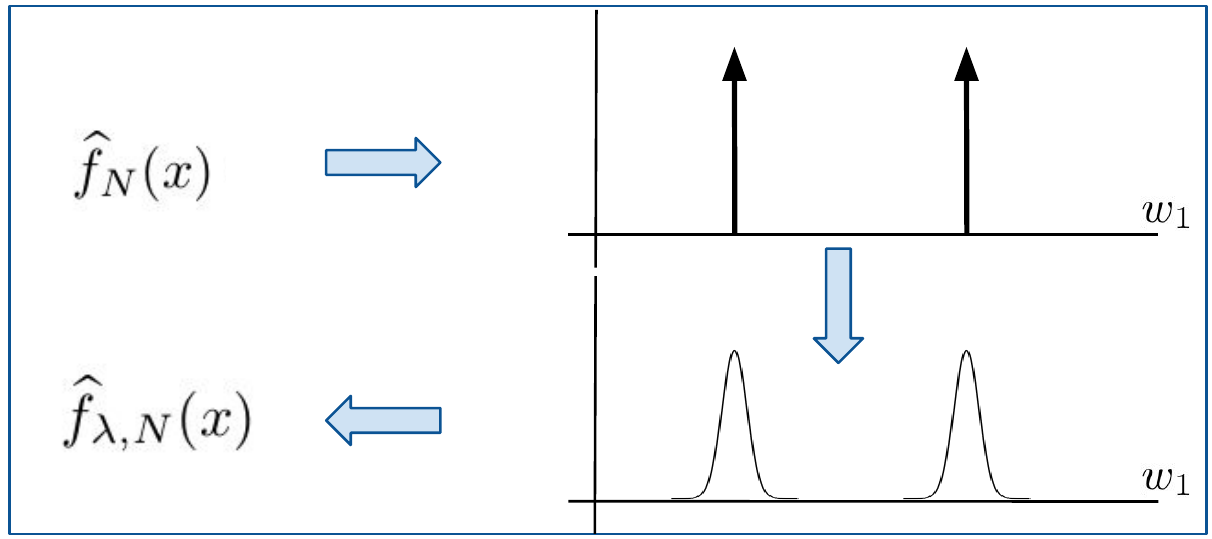}
\caption{\label{fig:apx:moll}The base approximation $\fhN$ is transformed into the mollified
approximation $\fhlN$ by replacing Dirac deltas $\delta$ with the mollified deltas $\deltal$.}
\end{figure}

\subsection{Main Result}
Let us define $E(x):=\|\,[x_1\ \cdots\ x_n\ 1]\,\|_2=\|X\|_2$ as the Euclidean norm mapping from 
$\R^n\times\{1\}$ to $\R$, and then define
\begin{equation}\label{eq:apx:EB}
\EB := \normB{E(x)}
\end{equation}
as the $L_2(\Bc,\muB)$ norm of this mapping over the Euclidean distances of $x\in\Bc$. We can now
state the main result, which bounds the difference between a base approximation and its 
corresponding mollified approximation, for a chosen mollifcation factor $\lambda$, in the 
$L_2(\Bc,\muB)$ norm.

\begin{theorem}\label{thm:apx:moll}
\textit{
Let there exist a base approximation $\fhN$ of the form \eqref{eq:apx:baseApx} using any 
activation function $\sigma$ satisfying Assumption~\ref{sigAsmpt}, for any $N\in\N$, with some 
unknown parameters $\upgamma_1,\dots,\upgamma_N\in\Upsilon$ and coefficients $\theta\in\R^N$, 
but having a known bound on the coefficient size of $\sum_{i=1}^N|\theta_i|\leq \Sc_N$ and the 
parameters set $\Upsilon$ known. Then, the mollified approximation $\fhlN$ over the 
$\lambda$-expanded parameters set $\Upsilonl$ in \eqref{eq:apx:mollApx} satisfies either of:}
\begin{align}\label{eq:apx:epslSupBound}
i)\hspace{55.5pt} \normB{\fhN-\fhlN} \ &\leq \ \Sc_N \, \Dsig \\\label{eq:apx:epslL2Bound}
ii)\hspace{52pt} \normB{\fhN-\fhlN} \ &\leq \ \Sc_N\, \frac{\Lsig\,\sqrt{n+1}}{\lambda}\ \EB \ .
\end{align}
\end{theorem}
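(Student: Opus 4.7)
The plan exploits that the mollified delta integrates to unity over $\Upsilonl$: by construction of the $\lambda$-expanded set, $\int_\Upsilonl \deltalno(\upgamma-\upgamma_i)\,\d\upgamma = 1$ for every $\upgamma_i\in\Upsilon$. Using this, I rewrite each term of $\fhN$ as $\sigma(\pariX) = \int_\Upsilonl \deltalno(\upgamma-\upgamma_i)\,\sigma(\pariX)\,\d\upgamma$ and, after swapping the finite sum with the integral, subtract the definition \eqref{eq:apx:mollApx} of $\fhlN$ to obtain
\begin{equation*}
\fhN(x)-\fhlN(x) \;=\; \int_\Upsilonl \sum_{i=1}^{N}\theta_i\,\deltalno(\upgamma-\upgamma_i)\,\bigl[\sigma(\pariX)-\sigma(\parX)\bigr]\,\d\upgamma.
\end{equation*}
A triangle inequality together with $\deltalno\geq 0$ and unit mass then yields the pointwise estimate $|\fhN(x)-\fhlN(x)| \leq \sum_{i=1}^N |\theta_i|\,\Delta_i(x)$, where $\Delta_i(x)$ is an upper bound on $|\sigma(\pariX)-\sigma(\parX)|$ as $\upgamma$ ranges over the cube support of $\deltalno(\cdot-\upgamma_i)$.

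Part (i) then follows by applying the first bound of Assumption~\ref{sigAsmpt} directly inside the integrand: $|\sigma(\pariX)-\sigma(\parX)|\leq \Dsig$ holds uniformly on $\Upsilon\times\Bc$, so $\Delta_i(x)\leq \Dsig$ and $|\fhN(x)-\fhlN(x)|\leq \Sc_N\,\Dsig$ uniformly in $x\in\Bc$. Because $\muB$ is a probability measure, taking the $L_2(\Bc,\muB)$ norm preserves this constant bound and gives \eqref{eq:apx:epslSupBound} at once.

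For part (ii), I would invoke the Lipschitz variant together with Cauchy--Schwarz to obtain $|\sigma(\pariX)-\sigma(\parX)|\leq \Lsig\,|(\upgamma_i-\upgamma)^\top X|\leq \Lsig\,\|\upgamma-\upgamma_i\|\,E(x)$. The key geometric observation is that the support of $\deltalno(\cdot-\upgamma_i)$ is the open cube $(-1/\lambda,1/\lambda)^{n+1}$ translated to $\upgamma_i$, whose $\ell_2$ diameter bound gives $\|\upgamma-\upgamma_i\|\leq \sqrt{n+1}/\lambda$ throughout the support. Substituting produces the pointwise bound $|\fhN(x)-\fhlN(x)|\leq \Sc_N\,\Lsig\,\sqrt{n+1}\,E(x)/\lambda$, and taking $L_2(\Bc,\muB)$ norms while recognizing $\normB{E(x)}=\EB$ from \eqref{eq:apx:EB} yields \eqref{eq:apx:epslL2Bound}. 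The only delicate step is the geometric estimate on $\|\upgamma-\upgamma_i\|$ over the cube support (and the confirmation that this support sits inside $\Upsilonl$, which is precisely what the $\lambda$-expansion was designed to guarantee); the rest is routine propagation of Assumption~\ref{sigAsmpt} through the mollified integral representation.
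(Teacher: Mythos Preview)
Your proposal is correct and follows essentially the same route as the paper: rewrite each $\sigma(\pariX)$ using the unit-mass property of $\deltalno(\cdot-\upgamma_i)$ over $\Upsilonl$, subtract to obtain an integral of $\theta_i\,\deltalno(\upgamma-\upgamma_i)[\sigma(\pariX)-\sigma(\parX)]$, bound pointwise by taking the supremum of the activation difference over the cube support $\Upsilonil$, and then invoke the two cases of Assumption~\ref{sigAsmpt} (with Cauchy--Schwarz and the cube radius $\sqrt{n+1}/\lambda$ in the Lipschitz case) before passing to the $L_2(\Bc,\muB)$ norm. The only cosmetic difference is that the paper explicitly writes the sup over $\phi\in\Upsilonil$ and labels each inequality, whereas you package this as the quantity $\Delta_i(x)$.
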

\begin{proof}
\if\ARXIV1
Given in Appendix~\ref{app:apx:mollProof}.
\fi
\if\ARXIV0
Given in Appendix I of \cite{lekang2023functionfull}.
\fi
\end{proof}

\begin{remark}
The results of Theorem~\ref{thm:apx:moll} show that if the activation function $\sigma$ is 
Lipschitz over its feasible inputs, then as $\lambda\to\infty$ we have $\fhlN(x)\to\fhN(x)$ in 
the sense of the $L_2(\Bc,\muB)$ norm.
\end{remark}

\section{Randomly Initialized Approximations}
\label{sec:apx:random}

In this section, we focus on random approximations $\fhR$ of the form \eqref{eq:apx:rndApx} and 
show that they exist (in high probability) with approximation error in the 
$L_2\left(\Bc,\muB\right)$ norm 
of the shape $O(\frac{1}{\sqrt{R}})$, down to the equivalent bound $\epsbase$ as in 
\eqref{eq:apx:normBound} of an existing base approximation $\fhN$ as in \eqref{eq:apx:baseApx}.

This is done by creating the corresponding mollified approximation $\fhlN$, which has a 
mollified integral representation and therefore can be approximated with randomly sampled 
parameters, which gives $\fhR$.

\begin{figure}[H]
\centering
\begin{tikzpicture}[node distance=1.5em,
        nodes={draw,fill=white},
        box/.style={align=left,inner sep=1ex},
        marrow/.style={single arrow,
              single arrow head extend=1mm,
              execute at begin node={\strut}}]
   \node (A) [box] {$\fhN$ };
   \node (B) [box, right=of A] {$\fhlN$};
   \node (C) [box, right=of B] {$\fhR$};
\draw [->] (A) edge (B) (B) edge (C);
\end{tikzpicture}
\end{figure}

\subsection{Expected Value of Random Approximations to Functions with Integral Representations}

An important property of functions that have integral representations with some activation 
$\sigma$ is that random approximations $\fhR$ of the form \eqref{eq:apx:rndApx} exist which are
exactly equal to the function in expected value over the randomly drawn parameters.

And so, the following lemma builds upon the mollified integral representation of the mollified
approximation $\fhlN$ to show that there always exists random approximations with randomly 
sampled parameters from the $\lambda$-expanded set $\Upsilonl$ satisfying this property.

\begin{lemma}\label{lem:apx:exp}
\textit{
Let the target function $f\in\Cs(\Bc)$ satisfy Theorem~\ref{thm:apx:moll} and thus have the
mollified approximation $\fhlN$ as in \eqref{eq:apx:intRep} with activation $\sigma$ for the 
selected mollification factor $\lambda>0$, which has the integral representation
\eqref{eq:apx:baseIntRepLam} over the $\lambda$-expanded parameters set $\Upsilonl$.
If the random parameters $\upgammab_1,\dots,\upgammab_R$ are sampled iid according
to any density $\Pl$ which is nonzero over $\Upsilonl$, then there exists a random approximation
$\fhR$ of the form \eqref{eq:apx:rndApx} using the activation $\sigma$ such that}
\begin{equation}\label{eq:apx:expL}
\Exp_{\upgammab_j\sim\Pl}\limits\left[\fhR(x\,;\upgammab_1,\dots,\upgammab_R)\right] \
= \ \fhlN(x)
\ .
\end{equation}
\end{lemma}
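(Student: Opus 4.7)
The plan is to use an importance sampling construction for the coefficients $\vartheta_j$ of the random approximation, relative to the sampling density $\Pl$.

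First I would set, for each $j\in[R]$,
\begin{equation*}
\vartheta_j \ := \ \frac{\gl(\upgammab_j)}{R\,\Pl(\upgammab_j)},
\end{equation*}
where $\gl$ is the mollified coefficient mapping defined in \eqref{eq:apx:gLam}. Because each $\upgamma_i\in\Upsilon$ and the mollified delta $\deltalno(\upgamma-\upgamma_i)$ is supported on a cube centered at $\upgamma_i$ that is contained in $\Upsilonl$ by construction, the support of $\gl$ lies inside $\Upsilonl$. Since $\Pl$ is nonzero on all of $\Upsilonl$, the ratio $\gl/\Pl$ is well-defined wherever it is needed, so the $\vartheta_j$ are valid real-valued coefficients determined by the random draws $\upgammab_j$.

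Next I would substitute this choice into \eqref{eq:apx:rndApx} and take the expectation under $\upgammab_j\stackrel{\text{iid}}{\sim}\Pl$. Using linearity of expectation and the iid assumption, each of the $R$ summands contributes the same value, which by the change-of-measure identity is
\begin{equation*}
\Exp\!\left[\frac{\gl(\upgammab_j)}{R\,\Pl(\upgammab_j)}\,\sigma(\upgammab_j^{\!\top}X)\right]
= \frac{1}{R}\int_{\Upsilonl} \gl(\upgamma)\,\sigma(\parX)\,\d\upgamma.
\end{equation*}
Summing over the $R$ terms cancels the $1/R$ factor, and I would then recognize the resulting integral as exactly the mollified integral representation \eqref{eq:apx:mollApx} of $\fhlN$, yielding \eqref{eq:apx:expL}.

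The argument is essentially the standard importance-sampling identity, so there is no real technical obstacle once the construction is in place. The one subtlety worth flagging is the measurability/integrability bookkeeping: $\gl$ is a finite linear combination of the bump functions $\deltalno(\cdot-\upgamma_i)$ defined in \eqref{eq:apx:mollDel}, which are bounded and continuous on $\Upsilonl$, so $\gl\,\sigma(\parX)$ is integrable on $\Upsilonl$ for each fixed $x\in\Bc$ and Fubini/linearity of expectation apply without difficulty. The key conceptual point the proof must highlight is that the $\lambda$-expansion of $\Upsilon$ into $\Upsilonl$ (as described in Section~\ref{sec:apx:moll}) is precisely what ensures $\gl$ has support contained in the sampling domain, so the importance-sampling reweighting is unbiased with no boundary losses.
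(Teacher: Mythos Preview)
Your proposal is correct and follows essentially the same approach as the paper: both set $\vartheta_j=\gl(\upgammab_j)/(R\,\Pl(\upgammab_j))$ and use the importance-sampling identity together with linearity of expectation and the iid assumption to recover the mollified integral representation \eqref{eq:apx:mollApx}. The paper writes out the $R$-fold integral explicitly before collapsing it, whereas you invoke linearity and identical distribution directly, but the underlying argument is the same.
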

\begin{proof}
\if\ARXIV1
Given in Appendix~\ref{app:apx:exp}.
\fi
\if\ARXIV0
Given in Appendix II of \cite{lekang2023functionfull}.
\fi
\end{proof}
\textit{Proof Sketch:} Let us first make the following definitions. With $\gl(\upgamma)$ as in 
\eqref{eq:apx:gLam} for any $\lambda>0$, we define
\begin{align}
\label{eq:apx:Cgl}
&\max_{\upgamma\in\Upsilonl}\limits|\gl(\upgamma)| \ \leq \ 
\sum_{i=1}^N|\theta_i|\left(\frac{\eta\lambda}{e}\right)^{n+1} \ =:\ \Cgl \ .
\end{align}
For the corresponding $\Upsilonl$, we define the diameter $\Dt:=\sup\{\|\upgamma-\phi\|_2 \ | \ 
\upgamma,\phi\in\Upsilonl\}$ and largest point $\upmax:=\sup\{\|\upgamma\|_2 \ | \ 
\upgamma\in\Upsilonl\}$, and for the selected $\Pl$ we define the minimum  $\pmin := 
\inf\{\Pl(\upgamma) \ | \ \upgamma\in\Upsilonl\}$.

Then, note that the RHS of \eqref{eq:apx:mollApx} is equivalently
\begin{align*}
&\fhlN(x) \ = \\
&\int_\Upsilonl \frac{\gl(\upgamma)}{\Pl(\upgamma)}\,\sigma(\upgamma^\top z)
\,\Pl(\upgamma)\d\upgamma
= \Exp_{\upgammab\sim\Pl}\limits
\left[\frac{\gl(\upgammab)}{\Pl(\upgammab)}\,\sigma(\upgammab^\top z)\right]
\ .
\end{align*}

Thus, a random approximation $\fhR$ exists which satisfies \eqref{eq:apx:expL} by setting its 
coefficients, for all $j\in[R]$, as
\begin{equation*}\label{eq:appApx:cjOpt}
\vartheta_j = \frac{\gl(\upgammab_j)}{\Pl(\upgammab_j)\,R}
\ =\ \frac{\sum_{i=1}^N\theta_i\,\deltalno(\upgammab_j-\upgamma_i)}{\Pl(\upgammab_j)\,R}
\ \ .
\end{equation*}

\subsection{Random Approximations to Base Approximations}
\label{sec:apx:main}

We now state and prove an overall bound on the approximation error in $L_2(\Bc,\muB)$ norm, 
between a base approximation $\fhN$ of the form \eqref{eq:apx:baseApx} to a target function 
$f\in \Cs(\Bc)$ and a random approximation $\fhR(x\,;\upgammab_1,\dots,\upgammab_R)$ of the form 
\eqref{eq:apx:rndApx}.

\begin{theorem}\label{thm:apx:main}
\textit{
Let there exist a base approximation $\fhN$ of the form \eqref{eq:apx:baseApx}, for some integer 
$N\in\N$, to a target function $f\in \Cs(\Bc)$, with unknown parameters 
$\upgamma_1,\dots,\upgamma_N$ in a known bounded parameter set $\Upsilon\subset\R^{n+1}$, 
unknown coefficients $\theta\in\R^N$ with a known bound on the coefficient size
$\sum_{i=1}^N|\theta_i|\leq\Sc_N$, and using an activation function $\sigma$ satisfying
Assumption~\ref{sigAsmpt}. Assume that $\normB{\fch(x)-\fhN(x)}\leq\epsbase$ holds for some 
$\epsbase>0$ which may depend on $N$, for an affine shift $\fch(x)=f-\as^\top\!x-\bs f(0)$ with
some $\as\in\R^n$ and $\bs\in\R$.
The mollified approximation $\fhlN$ as in \eqref{eq:apx:mollApx}
satisfies the results of Theorem~\ref{thm:apx:moll} for any mollification factor $\lambda>0$.
Let the random parameters $\upgammab_1,\dots,\upgammab_R$ be sampled iid according to any
nonzero density $\Pl$ over the $\lambda$-expanded set $\Upsilonl$, which has diameter $\Dt$ and
farthest point $\upmax$, and let $\EB$ be as in \eqref{eq:apx:EB}. Then for any $\nu\in(0,1)$,
with probability greater than $1-\nu$ there exists a random approximation 
$\fhR(x\,;\upgammab_1,\dots,\upgammab_R)$ as in \eqref{eq:apx:rndApx} with
$|\vartheta_j|\leq\frac{\Cgl}{\pmin R}$ for all $j\in[R]$ such that either of the following 
hold:}
\begin{align*}
&\normB{\,\fch(x) - \fhR(x\,;\upgammab_1,\dots,\upgammab_R)} \ \leq \\
&i)\hspace{34pt}
\epsbase + 
\Sc_N\Dsig\left(1 \ + \ 
\frac{\lambda^{n+1}}{\sqrt{R}\,}\,\KD(\nu)\right) \\
&ii)\hspace{30pt}
\epsbase + 
\Sc_N\Lsig\left(\frac{\sqrt{n+1}}{\lambda}\,\EB +
\frac{\lambda^{n+1}}{\sqrt{R}\,}\,\KL(\nu)\right) \ \  .
\end{align*}
The coefficients $\KD$ and $\KL$ are defined in terms of the selected $\nu$ as:
\begin{align*}
&\KD(\nu) \ := \ 
\frac{\eta^{n+1}}{e^{n+1}\pmin}
\left(
1 + \frac{|\sigma(0)|}{\Dsig} \right. \\
&\left. + \bigg(3 + 2\frac{|\sigma(0)|}{\Dsig}\bigg) 
\sqrt{\frac{1}{2}\log\left(\frac{1}{\nu}\right)}
\right) \\
&\KL(\nu) \ := \
\frac{\eta^{n+1}}{e^{n+1}\pmin}
\left(
\upmax\EB + \frac{|\sigma(0)|}{\Lsig}\right. \\
&\left. + \bigg((\Dt+2\upmax)\EB + 
2\frac{|\sigma(0)|}{\Lsig}\bigg) \sqrt{\frac{1}{2}\log\left(\frac{1}{\nu}\right)}
\right).
\end{align*}
\end{theorem}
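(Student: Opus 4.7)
The plan is to use the triangle inequality to split the overall error into three pieces and to handle each piece with the appropriate tool already set up in the paper. First I would write
$$
\normB{\fch-\fhR} \ \leq\ \normB{\fch-\fhN}+\normB{\fhN-\fhlN}+\normB{\fhlN-\fhR}.
$$
The first term is bounded by $\epsbase$ directly from the hypothesis on the base approximation. The second term is exactly what Theorem~\ref{thm:apx:moll} controls, yielding either $\Sc_N\Dsig$ in case (i) or $\Sc_N\Lsig\sqrt{n+1}\,\EB/\lambda$ in case (ii). All of the remaining work goes into the third, stochastic, term.

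For that third piece, my plan is to exploit Lemma~\ref{lem:apx:exp}: with the coefficient choice $\vartheta_j=\gl(\upgammab_j)/(\Pl(\upgammab_j)R)$ sketched after that lemma, we have $\Exp[\fhR(x)]=\fhlN(x)$ pointwise, so $\fhR-\fhlN$ is an average of $R$ iid mean-zero random functions. I would then bound $\normB{\fhlN-\fhR}$ in two steps. Step one: control its expectation by Jensen, $\Exp\normB{\fhlN-\fhR}\le(\Exp\normB{\fhlN-\fhR}^2)^{1/2}$, then swap expectation and $x$-integral via Fubini to reduce to a pointwise variance, which for an iid average scales like $1/\sqrt{R}$. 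Step two: upgrade this expectation bound to a high-probability bound via McDiarmid's bounded differences inequality applied to the function $(\upgammab_1,\ldots,\upgammab_R)\mapsto\normB{\fhlN-\fhR}$; swapping a single $\upgammab_j$ changes $\fhR$ only in its $j$th summand, and since $|\vartheta_j|\leq\Cgl/(\pmin R)$ with $\Cgl$ as in \eqref{eq:apx:Cgl}, the $L_2(\Bc,\muB)$ perturbation is at most $2\Cgl\,B/(\pmin R)$, where $B$ is an $L_2(\Bc,\muB)$-type bound on $|\sigma(\upgamma^\top X)|$ uniform over $\upgamma\in\Upsilonl$.

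The computation of $B$ is where the cases split and the constants $\KD$ and $\KL$ emerge. In case (i) Assumption~\ref{sigAsmpt} gives $|\sigma(\upgamma^\top X)|\leq\Dsig+|\sigma(0)|$ pointwise, so $B$ is essentially $\Dsig+|\sigma(0)|$, and after tracking both the variance bound and the McDiarmid deviation and dividing through by $\Sc_N\Dsig\lambda^{n+1}/\sqrt{R}$ one recovers the stated form of $\KD(\nu)$ with its characteristic $\frac{\eta^{n+1}}{e^{n+1}\pmin}$ prefactor coming from $\Cgl/\Sc_N$. In case (ii) Lipschitzness gives only $|\sigma(\upgamma^\top X)|\leq|\sigma(0)|+\Lsig\,\|\upgamma\|\,E(x)$, so $B$ becomes an $L_2(\Bc,\muB)$ norm of a product and picks up $\upmax\,\EB$ and $|\sigma(0)|/\Lsig$ terms; the McDiarmid perturbation additionally produces a factor of order $\Dt+2\upmax$ when two different parameters in $\Upsilonl$ are compared, which is what yields the $(\Dt+2\upmax)\EB+2|\sigma(0)|/\Lsig$ term inside $\KL(\nu)$.

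The main obstacle I foresee is bookkeeping: there are four independent sources of constants (the bound on $|\vartheta_j|$ via $\Cgl/\pmin$, the uniform bound on $|\sigma(\upgamma^\top X)|$, the bounded-difference constant, and the variance proxy), and they must be combined so that the output matches the precise split of $\KD(\nu)$ and $\KL(\nu)$ into an $\Exp[\,\cdot\,]$ part plus a $\sqrt{\tfrac{1}{2}\log(1/\nu)}$ part. The conceptual argument is short, but aligning the constants with the claimed expressions, especially the affine shift used to define $\fch$ and the $|\sigma(0)|$ offsets that survive when $\sigma$ is not centered, is the step requiring care.
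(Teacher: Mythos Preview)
Your proposal is correct and follows essentially the same route as the paper's proof: a three-term triangle-inequality split, Theorem~\ref{thm:apx:moll} for the mollification error, Lemma~\ref{lem:apx:exp} to identify $\Exp[\fhR]=\fhlN$, then Jensen plus the iid-variance bound for $\Exp[h_\Bc]$ and McDiarmid's inequality for the high-probability deviation. The only place where the paper is slightly more specific is the bounded-differences step, where it uses the decomposition $\|af-bg\|\le|a|\,\|f-g\|+|b-a|\,\|g\|$ (rather than a crude $2\Cgl B/(\pmin R)$ bound) to produce exactly the $3+2|\sigma(0)|/\Dsig$ and $(\Dt+2\upmax)\EB+2|\sigma(0)|/\Lsig$ constants you anticipate.
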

\begin{proof}
\if\ARXIV1
Given in Appendix~\ref{app:apx:mainProof}.
\fi
\if\ARXIV0
Given in Appendix II of \cite{lekang2023functionfull}.
\fi
\end{proof}
\textit{Proof Sketch:} We closely follow the strategy in Lemma 4 of
\cite{rahimi2008weighted}, which uses McDiarmid's inequality to obtain the final result.

We start by defining function $h_\Bc:\Upsilon^R_\lambda\to\R$, which captures the norm difference
between the random approximation and its expected value over the iid draws of 
$(\upgammab_1,\dots,\upgammab_R)\in\Upsilon^R_\lambda$, as
\begin{align}
\label{eq:apx:hB}
&h_\Bc(\upgammab_1,\dots,\upgammab_R) \ := \\\nonumber
&\normB{\,\fhR(x\,;\upgammab_1,\dots,\upgammab_R)
-\Exp_{\upgammab_j\sim\Pl}\limits\left[\fhR(x\,;\upgammab_1,\dots,\upgammab_R)\right]} \ \ .
\end{align}
Thus, we can use Lemma~\eqref{lem:apx:exp} to show that there exists $\fhR$ such that 
\eqref{eq:apx:hB} is equivalently
$$
h_\Bc(\upgammab_1,\dots,\upgammab_R) \ = \ \normB{\,\fhlN(x) - 
\fhR(x\,;\upgammab_1,\dots,\upgammab_R)} \ .
$$
We then can bound the expected value
$$
\Exp_{\upgammab_j\sim\Pl}\limits\Big[h_\Bc(\upgammab_1,\dots,\upgammab_R)\Big]
$$
and the absolute value of the difference between any 
$$
\big|h_\Bc(\upgamma_1,\dots,\upgamma_j,\dots,\upgamma_R)-
h_\Bc(\upgamma_1,\dots,\widetilde\upgamma_j,\dots,\upgamma_R)\big|
$$
that differ only at one coordinate, over each $j\in[R]$. These two bounds then allow us to apply 
the results of McDiarmid's inequality, thus bounding (in high probability)
$$
\normB{\,\fhlN(x) - 
\fhR(x\,;\upgammab_1,\dots,\upgammab_R)} \ .
$$
The proof is then completed by using the triangle inequality, with
\begin{align*}
&\normB{\,\fch(x) - \fhR(x\,;\upgammab_1,\dots,\upgammab_R)} \ \leq \ 
\normB{\fch(x)-\fhN(x)} +\\ &\ \normB{\fhN(x)-\fhlN(x)} 
+\normB{\fhlN(x)-\fhR(x\,;\upgammab_1,\dots,\upgammab_R)} \ .
\end{align*}
where the first term is assumed bounded by $\epsbase$ for the base approximation $\fhN$ and the 
second term is bounded by Theorem~\ref{thm:apx:moll}.

\begin{remark}
Classic results from \cite{barron1993universal} and \cite{breiman1993hinging} prove the existence
of \textit{convex} combinations of step \eqref{eq:pe:step} and ReLU \eqref{eq:pe:ReLU} 
activations that approximate (affine shifts of) functions in specific classes arbitrarily well, 
with an 
approximation error proportional to $\epsbase=O\left(\frac{1}{\sqrt{N}}\right)$. Importantly, 
the 
convexity property allows the length (number
of terms) $N$ to be arbitrarily large while maintaining the same overall bound on the 
coefficient size $\sum_{i=1}^N|\theta_i|\leq\Sc_N$.
This means we can allow the number of terms to grow 
arbitrarily large $N\to\infty$, since $N$ does not appear directly in the results of 
Theorem~\ref{thm:apx:main}. This gives $\epsbase\to0$, while maintaining the same bound $\Sc_N$.
\end{remark}

\section{Application}
\label{sec:apx:application}

In this section, we will first show how he $L_2$ norm bounds of the previous sections can be 
extended to $L_\infty$ (supremum) norm bounds when the target function $f\in\Cs(\Kc)$ is 
differentiable at the origin and Lipschitz on a convex compact set $\Kc\subset\R^n$ including 
the origin. Then we will apply the theory to an approximate extension of the Model Reference 
Adaptive Control (MRAC) setup in \cite{lavretsky2013robust}.

\subsection{Extending the Error Bound to $L_\infty$ for Lipschitz Functions on Convex Compact
Sets}
\label{sec:apx:extLinf}

We assume that $\Kc$ is full dimension, with nonzero Lebesgue measure (volume) 
$\Vc=\mu(\Kc)>0$ and diameter $\Dc>0$. It also must hold that $\Kc\subseteq B_0(\rho)$ for a 
sufficiently large radius $0<\rho\leq\Dc$.

Then, for any approximation $\fhN$ of the form \eqref{eq:apx:baseApx} using an activation
function $\sigma$ satisfying Assumption~\ref{sigAsmpt}, we have
that
\begin{align*}
&\abs{\sum_{i=1}^N \theta_i\,\sigma(\pariX) - \sum_{i=1}^N \theta_i\,\sigma(\pariY)} \\ 
&\leq \sum_{i=1}^N \abs{\theta_i\,\sigma(\pariX) - \theta_i\,\sigma(\pariY)} 
\\
&= \sum_{i=1}^N |\theta_i|\!\abs{\sigma(\pariX) - \sigma(\pariY)}
\leq
\sum_{i=1}^N |\theta_i|\Lsig\abs{\pariX - \pariY}\\
& =  
\sum_{i=1}^N |\theta_i|\Lsig\abs{w_i^\top\!(x-y)} 
\ \leq \  
\sum_{i=1}^N |\theta_i|\!\norm{w_i}_2\,\norm{x-y}_2
\end{align*}
holds for all $x,y\in\R^n$, recalling that $X:=[x_1\ \cdots\ x_n\ 1]^\top$ and $Y:=[y_1\ \cdots\ 
y_n\ 1]^\top$, since $\abs{\sigma(u)-\sigma(v)}\leq\Lsig\abs{u-v}$ for any $u,v\in\R$ by the 
Lipschitz condition of Assumption~\ref{sigAsmpt}. Thus, such approximations are 
$\widehat\Lc$-Lipschitz with constant
\begin{equation}\label{eq:apx:Lhat}
\widehat\Lc \ = \ \Lsig\sum_{i=1}^N |\theta_i|\!\norm{w_i}_2 \ \ .
\end{equation}

A similar calculation gives that such approximations are $\widehat\Ds$-bounded with constant
\begin{equation}\label{eq:apx:Dhat}
\widehat\Ds \ = \ \Dsig\sum_{i=1}^N |\theta_i|
\end{equation}
if $\abs{\sigma(u)-\sigma(v)}\leq\Dsig$ for any $u,v\in\R$ by the bounded condition of 
Assumption~\ref{sigAsmpt}.

Let an affine shift to function $f$ be defined as
\begin{equation}\label{eq:apx:fshift}
\fch(x) \ := \ f(x) - \as^\top x - \bs
\end{equation}
for any $\as\in\R^n$ and $\bs\in\R$.
Thus, if $f$ is $\Lc$-Lipschitz, then
\begin{align*}
&\abs{\fch(x)-\fch(y)} \ = \ 
\abs{f(x)-f(y) - \as^\top(x-y)} \\
&\leq \ 
\abs{f(x)-f(y)} + \abs{\as^\top(x-y)} \\
&\leq\ \Lc\norm{x-y}_2 + \norm{\as}_2\norm{x-y}_2
\end{align*}
holds for all $x,y\in\R^n$. And so, $\fch$ is also Lipschitz, with constant
$\widecheck\Lc = \Lc + \norm{\as}_2$.

\begin{lemma}\label{lem:apx:extLinf}
\textit{
Let $f:\R^n\to\R$ be $\Lc$-Lipschitz on a convex compact set $\Kc\subset\R^n$ containing the 
origin and differentiable at the origin. Let there also be an approximation $\fh:\R^n\to\R$
which is $\widehat\Ds$-bounded or $\widehat{\Lc}$-Lipschitz on $\Kc$ and such that the 
approximation error $\fw(x)=\fch(x)-\fh(x)$, with $\fch$ as in \eqref{eq:apx:fshift}, satisfies 
the $L_2(\Kc,\mubKc)$ norm bound}
$$
\normK{\fw} = \sqrt{\int_\Kc\abs{\fw(x)}^2\mubKcdx} \ \leq \ \epsbase
$$\noindent
\textit{
where the bound $\epsbase>0$ is a finite constant and $\mubKcdx$ is the uniform probability
measure on $\Kc$ with $\int_\Kc\mubKcdx=1$.}

\textit{
Assume $\Kc$ is full dimension with diameter $\Dc>0$, and it holds that $\Kc\subseteq 
B_0(\rho)$ for a ball of sufficiently large radius $0<\rho\leq\Dc$.}
\textit{
Then, the approximation error also satisfies the $L_\infty(\Kc)$ norm bound}
\begin{align}\label{eq:apx:lemDLinfBound}
&\sup_{x\in\Kc}\abs{\fw(x)} \ \leq \ 
2\,
\left(\frac{r}{\Dc(r+\sqrt{r^2+\Dc^2})}\right)^{\!\frac{-n}{n+2}}\,K(\epsbase)
\end{align}
\textit{where}
\begin{align*}
K(\epsbase) \ = \\
i)& \hspace{30pt} \left(\left(\Lc + \norm{\as}_2\right)^n
\epsbase^{\,2}\right)^{\frac{1}{n+2}}+\widehat{\Ds}
\\
ii)& \hspace{30pt}
\left(\left(\Lc + \norm{\as}_2 + \widehat{\Lc}\right)^n
\epsbase^{\,2}\right)^{\frac{1}{n+2}}
\end{align*}
\textit{
and $r$ is the largest radius ball within $\Kc$ centered at its centroid.}
\end{lemma}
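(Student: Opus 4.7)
My plan follows the standard reduction of an $L^2$ bound to an $L^\infty$ bound via oscillation control. First I would let $M:=\sup_{x\in\Kc}|\fw(x)|$, attained at some $x^\star\in\Kc$ by compactness of $\Kc$ and continuity of $\fw$. The excerpt already establishes that $\fch$ is Lipschitz with constant $\widecheck{\Lc}:=\Lc+\|\as\|_2$. Combined with the hypothesis on $\fh$: in case (ii), $\fw$ is Lipschitz with constant $L:=\widecheck{\Lc}+\widehat{\Lc}$, giving the pointwise lower bound $|\fw(y)|\geq M-L\,\|y-x^\star\|$; in case (i), the bounded-oscillation property gives only $|\fw(y)|\geq M-\widehat{\Ds}-\widecheck{\Lc}\,\|y-x^\star\|$ (which becomes vacuous when $M\leq\widehat{\Ds}$, a case in which the lemma is trivial).

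Next I would exploit convexity of $\Kc$ to extract a region of positive Lebesgue measure around $x^\star$ inside $\Kc$ on which this lower bound is nontrivial. Since $\Kc\supseteq B_{c_0}(r)$ for the centroid $c_0$ and has diameter $\Dc$, the ``ice-cream cone'' $\operatorname{conv}(\{x^\star\}\cup B_{c_0}(r))$ lies inside $\Kc$. Intersecting with a small Euclidean ball $B_{x^\star}(\delta)$ yields a spherical sector $S_\delta\subset\Kc$ whose Lebesgue measure equals $\alpha\cdot |B_{x^\star}(\delta)|$ for a solid-angle fraction $\alpha=\alpha(r,\Dc,n)$ depending on the half-angle of the tangent cone from $x^\star$ to $B_{c_0}(r)$. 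Elementary trigonometry, worst-cased at $\|x^\star-c_0\|=\Dc$, produces the precise factor $r/(\Dc(r+\sqrt{r^2+\Dc^2}))$ appearing in the statement.

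Then I would integrate radially. Choosing $\delta$ as the zero of the Lipschitz lower bound (i.e.\ $\delta=M/L$ in case (ii), $\delta=(M-\widehat{\Ds})/\widecheck{\Lc}$ in case (i)), the $L^2$ hypothesis yields
\[
\epsbase^{\,2}\;\geq\;\int_{\Kc}|\fw(y)|^2\,\mubKc(\d y)\;\geq\;\frac{\alpha\,c_n}{\Vc}\int_{0}^{\delta}(M-L\,t)^{2}\,t^{\,n-1}\,\d t,
\]
where $c_n$ is the surface area of the unit sphere in $\R^n$, and $M$ is replaced by $M-\widehat{\Ds}$ in case (i). This Beta-function-type integral evaluates to a constant multiple of $\alpha\,M^{n+2}/(L^n\,\Vc)$ (respectively $\alpha(M-\widehat{\Ds})^{n+2}/(\widecheck{\Lc}^{\,n}\,\Vc)$). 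Solving for $M$ and collecting the constants yields a bound of the form $M\leq(\text{geometric factor})\cdot K(\epsbase)$, which after cleanup matches the stated inequality.

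The main obstacle is the geometric bookkeeping: extracting the precise constant $2\bigl(r/(\Dc(r+\sqrt{r^2+\Dc^2}))\bigr)^{-n/(n+2)}$ requires carefully identifying the worst-case solid-angle fraction of the tangent cone from a boundary point, evaluating the $(M-L\,t)^{2}t^{\,n-1}$ integral in closed form, and consolidating all constants (including $\Vc$ and $c_n$) into the stated expression. The analytic core of the argument is routine; it is the trigonometry and the careful constant-tracking that are the fiddly parts.
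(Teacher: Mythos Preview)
Your overall strategy—lower-bound $|\fw|$ near its maximizer via the Lipschitz/oscillation estimate, then play this against the $L^2$ hypothesis over a conical region of $\Kc$ near $x^\star$—is exactly the paper's. The tactical choices differ in two places, and one of them matters for the constant you are trying to recover.

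First, the paper does not integrate $(M-Lt)^2 t^{n-1}$ radially. It uses the cruder level-set (Chebyshev) bound $\mubKc(\{|\fw|\geq h\})\leq \epsbase^2/h^2$, introduces a free scale parameter $d\geq1$, observes that on $S_d:=\Kc\cap B_{\xh}(h/(d\Lcw))$ one has $|\fw|\geq\frac{d-1}{d}h$, and later optimizes $d=(n+2)/n$; this optimization is where the leading factor $2$ comes from. Your Beta-integral route is more direct and would give a comparable (in fact slightly sharper) numerical prefactor.

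Second—and this is where your claim that the factor $r/\bigl(\Dc(r+\sqrt{r^2+\Dc^2})\bigr)$ ``falls out of elementary trigonometry on the solid-angle fraction'' goes wrong—the paper does \emph{not} lower-bound $\mubKc(S_d)$ by a spherical sector. It instead inscribes a full Euclidean ball inside the ice-cream cone $\operatorname{conv}(\{x^\star\}\cup B_{c_\Kc}(r))\cap B_{x^\star}(h/(d\Lcw))$, tangent to the cone's lateral surface and to the outer sphere. The ratio of the inscribed ball's radius to $h/(d\Lcw)$ works out to $\sin\vartheta/(1+\sin\vartheta)=r/(r+\sqrt{r^2+\ell^2})$, and it is precisely this ratio (worst-cased at $\ell=\Dc$, then combined with $\Vc\leq \Dc^n\,\pi^{n/2}/\Gamma(\tfrac{n}{2}+1)$) that produces the stated geometric factor. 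The solid-angle fraction of the tangent cone is a different function of $\vartheta$ (an incomplete-beta expression, scaling like $\vartheta^{n-1}$ rather than $\vartheta^n$ for small half-angles), so your route would yield a different—actually sharper—constant, which implies the stated bound but does not reproduce its exact form without a separate comparison. If you want the constant as written, replace the sector by the inscribed-ball lower bound.
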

\begin{proof}
\if\ARXIV1
Given in Appendix~\ref{app:apx:lemLinfProof}.
\fi
\if\ARXIV0
Given in Appendix III of \cite{lekang2023functionfull}.
\fi
\end{proof}

\subsection{Application to MRAC}
\label{sec:apx:LMRAC}

In Chapter 12 of \cite{lavretsky2013robust}, an approximate extension to the general linear MRAC 
setup (see Chapter 9) is introduced which allows for nonlinearities $f:\R^n\to\R^\ell$ as
$
f(x) \ = \ \Theta^\top\!\Psi(x) + \epsilon_f(x)
$
such that the plant is given by
\begin{align}\nonumber
\dot{x}_t \ &= \ A\,x_t + B\big(u_t + f(x)) \\\label{eq:apx:plant}
&= \ A\,x_t + B\big(u_t + \Theta^\top\!\Psi(x_t) + \epsilon_f(x)\big) \ \ ,
\end{align}
where $A$ is a known $n\times n$ state matrix for the plant state $x_t\in\R^n$, $B$ is a known
$n\times\ell$ input matrix for the input $u_t\in\R^\ell$, and $\Theta$ is an \underline{unknown}
$N\times\ell$ matrix which linearly parameterizes the known vector function $\Psi:\R^n\to\R^N$.
We assume $(A,B)$ is controllable. The general setup also includes an unknown diagonal scaling 
matrix $\Lambda$, such that the overall input matrix is $B\Lambda$, and assumes that $A$ is 
unknown. For simplicity, we assume $A$ is known and omit $\Lambda$.

It is assumed that there exists an $n\times\ell$ matrix of feedback gains $K_x$ and an
$\ell\times\ell$ matrix of feedforward gains $K_r$ satisfying the \textit{matching conditions}
\begin{align*}\nonumber
A + BK_x^\top = A_r \\
BK_r^\top = B_r
\end{align*}
to a controllable, linear reference model
\begin{equation*}
\dot{x}_t^r = A_r\,x_t^r + B_r\,r_t \ \ ,
\end{equation*}
where $A_r$ is a known Hurwitz $n\times n$ reference state matrix for the reference state
$x^r_t\in\R^n$, $B_r$ is a known $n\times\ell$ reference input matrix, and $r_t\in\R^\ell$ is a
bounded reference input. Here, we assume that $K_x$ and $K_r$ can be directly calculated 
from known $A$ and $B$, and used directly in the control law.

It is required that the nonlinearity satisfy the bound
\begin{equation}\label{eq:apx:epsf}
\norm{\epsilon_f(x)}_2 \ \leq \ \bar\varepsilon
\end{equation}
for some constant $\bar\varepsilon>0$ and for all $x\in B_0(r)$ with some radius $r>0$. Then,
the adaptive control law
$$
u_t \ = \ K_x^\top x_t - \Thetah_t^\top\!\Psi(x_t) + \left(1-\mu(x)\right)K_r^\top r_t + 
\mu(x)\uw(x_t)
$$
is shown to stabilize the state tracking error $e_t = x_t-x_t^r$ down to a compact set about the 
origin, where the $N\times\ell$ matrix of parameter estimates $\Thetah_t$ is dynamically updated 
with the update rule
\begin{equation*}
\dot{\Thetah}_t = \Gamma\,\Psi(x_t)\,e_t^\top P_xB \ \ .
\end{equation*}
The scalar function $\mu:\R^n\to[0,1]$
transitions the control law from tracking the reference input to simply returning the plant 
state $x_t$ to the bounded region $B_0(r)$ within which \eqref{eq:apx:epsf} is valid, using an 
appropriately defined $\uw(x_t)$ based on assumptions about the growth of $\epsilon_f(x)$ 
outside of $B_0(r)$. (See Chapter~12 of \cite{lavretsky2013robust} for details.)

And so, if the vector function $\Psi(x)$ is constructed as
\begin{equation}\label{eq:pe:Psi}
\Psi(x)
= 
\begin{bmatrix}
\sigma(\upgamma_1^\top X) \\
\vdots \\
\sigma(\upgamma_N^\top X)
\end{bmatrix} \ ,
\end{equation}
with an activation function $\sigma$ satisfying the bounded (growth) conditions of 
Assumption~\ref{sigAsmpt} with constant $\Dsig$ or $\Lsig$, then the above setup is valid for
any nonlinearity $f = [f_1\ \cdots\ f_\ell]$ where we 
can show that
$$
\sup_{x\in\Kc}\abs{f_i(x)-\Theta_i^\top\Psi(x)} \ \leq \ \bar\varepsilon_i
$$
holds for some constants $\bar\varepsilon_1,\dots,\bar\varepsilon_\ell>0$. Here, 
$f_1,\dots,f_\ell:\R^n\to\R$ and each $\Theta_i^\top\in\R^N$ is the corresponding row of the 
true parameters $\Theta^\top$.

\section{Conclusion and Future Directions}
\label{sec:conclusion}

In this paper, we developed a novel method for bridging approximations using activation 
functions with unknown parameters to approximations using randomly initialized parameters, with 
the mollified integral representation. We showed that this could be extended to the supremum 
norm for use with an extended, approximate version of the linear MRAC setup.

There are two immediate paths that we can see along the lines of extending this work and 
application. 1) Is it possible to do 
``approximate persistency of excitation" on the extended, approximate version of the linear MRAC 
setup, similar to what was achieved in \cite{lekang2022sufficient} but with the parameter 
estimates only converging to a compact set about the origin, instead of being asymptotically 
stable. 2) Extending the work of \cite{irie1988capabilities} towards the determination of 
integral representations for more general classes of functions. In particular, using linear 
combinations of step functions, which cannot be trained using backpropagation in the modern 
paradigm of training deep neural networks. Yet in the 1D case, we can easily show that the step 
function is superior at uniformly approximating functions than the ReLU function. In fact, the 
step can handle discontinuous (regulated) functions, while the ReLU can only deal with 
continuous functions.

\newpage
\printbibliography

\if\ARXIV1
\newpage
\appendices
\onecolumn
\section{}

\subsection{Useful Relations}\label{app:apx:useful}

\subsubsection*{Reverse Triangle Inequality}

Let $\Fc$ be some function space with $f,g\in\Fc$. Let $\|\cdot\|$ be any norm
on $\Fc$. Then we have:
\begin{align*}
\|f\| = \|f - g + g\| \ &\leq \ 
\|f - g\| + \|g\| \\ 
\|g\| = \|g - f + f\| \ &\leq \ 
\|g - f\| + \|f\| = \|f-g\| + \|f\| \ \ ,
\end{align*}
due to the triangle inequality. Therefore, it holds that
\begin{equation}\label{eq:appApx:revTri}
\big|\,\|f\|-\|g\|\,\big| \ \leq \ \|f-g\| \ \ .
\end{equation}\\

\subsubsection*{Norm Difference for Functions with Different Coefficients}

Let $\Fc$ be some function space with $f,g\in\Fc$. Let $\|\cdot\|$ be any norm
on $\Fc$, and $a,b\in\R$. Then we have:
\begin{equation}\label{eq:appApx:normDiffCoef}
\|af-bg\| = \|a(f-g) - \Delta g\|
\leq
|a|\,\|f-g\| + |\Delta|\,\|g\|
\end{equation}
where $\Delta=b-a$, by using the triangle inequality.\\\\

\subsubsection*{Bounding Expected Variation of Sample Average Function from its Mean}

Let $\fb,\fb_1,\dots,\fb_N$ be drawn iid from a function space $\Fc$ and define the
sample average function $\fbbN=\frac{1}{N}\sum_{i=1}^N\fb_i$. With the $L_2(\Bc,\mu)$ norm (for
any valid measure $\mu$) denoted as $\normB{\cdot}$, we have
\begin{align}\nonumber
\E\normB{\fbbN - \E\fbbN}^2 \ &= \ \E\int_\Bc \left(\fbbN - \E\fbbN\right)^2\mu(\dx) \\ \nonumber
&\overset{(a)}{=} \ \int_\Bc \left(\E\left[\fbbN^{\,2}\right]
-2\,\E\left[\fbbN\right]\E\left[\fbbN\right]
+ \E\left[\fbbN\right]^2 \right)\mu(\dx) \\ \nonumber
&\overset{(b)}{=} \ \int_\Bc \left(\E\left[\fbbN^{\,2}\right] -
\E\left[\fbbN\right]^2 \right)\mu(\dx) \ 
= \int_\Bc \text{var}\!\left(\fbbN\right)\mu(\dx) \\ \nonumber
&\overset{(c)}{=} \ \int_\Bc \frac{1}{N}\,\text{var}(\fb)\,\mu(\dx) \ 
= \ \frac{1}{N} \int_\Bc \left(\E\left[\fb^2\right]-\E[\fb]^2\right)\mu(\dx) \\ 
\label{eq:appApx:varBnd}
&= \ \frac{1}{N}\left(\E\normB{\fb}^2-\normB{\E[\fb]}^2\right) \ 
\overset{(d)}{\leq} \ \frac{1}{N}\,\E\normB{\fb}^2 \ \ .
\end{align}
Here (a) is by using Fubini's theorem to bring the expectation inside the integral, the
linearity of expectation, and the outer expectation then being redundant for the last term, (b)
is from combining the middle and last terms, (c) is by the facts of variance of averaged iid
random variables (covariance terms are zero, while means and variances are equal), and (d) is
because the second term is nonpositive and therefore it can be dropped with inequality.\\\\

\subsubsection*{McDiarmid's Inequality}\label{sec:app:McDInq}

Let $\vb_1,\dots,\vb_d$, for any $d\in\N$, be independent random variables in the sets
$\Vc_1,\dots,\Vc_d$ and let the function $h:\Vc_1\times\dots\times\Vc_d\to\R$ satisfy for all
$i\in[d]$ and some positive scalars $k_1,\dots,k_d>0$, that
\begin{equation*}
|h(v_1,\dots,v_i,\dots,v_d)-h(v_1,\dots,\widetilde v_i,\dots,v_d)|\leq k_i
\end{equation*}
holds for all points $(v_1,\dots,v_i,\dots,v_d),(v_1,\dots,\widetilde v_i,\dots,v_d)\in
\Vc_1,\times\dots\times\Vc_d$ differing only at $v_i,\widetilde v_i\in\Vc_i$. Then, for all 
$t\geq0$, it holds that
\begin{equation}\label{eq:appApx:McDIneq}
\P\Big[\,h(\vb_1,\dots,\vb_d)-\E[h(\vb_1,\dots,\vb_d)]\,\geq t\,\Big]
\ \leq \ \exp\left(\frac{-2\,t^2}{\sum_{i=1}^dk_i^2}\right) \ \ .
\end{equation}

\newpage
\subsubsection*{Lemma on Approximating Functions of Convex Combinations}

The following lemma is credited to Maurey in \cite{pisier1981remarques}.

\begin{lemma}\label{lem:appApx:convComb}
If $\fch$ is in the closure of the convex hull of a bounded set of functions $\Fc$ in a Hilbert
space $\Hc$, such that $\norm{f}_{\Hc}\leq b$ holds for all $f\in\Fc$, then for every $N\geq1$,
there exists an $\fbN$ in the convex hull of $N$ points in $\Fc$ such that
\begin{equation*}
\norm{\fch-\fbN}_{\Hc}^2 \ \leq \ \frac{\bar{c}}{N}
\end{equation*}
holds for any $\bar{c} > b^2 - \norm{\fch\,}_{\Hc}^2 \ $.
\end{lemma}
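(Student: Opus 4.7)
The plan is to execute the classical probabilistic averaging argument underlying Maurey's lemma: draw $N$ iid functions from a distribution supported on $\Fc$ whose mean is (approximately) $\fch$, and show that the sample average is close to $\fch$ in expectation, so at least one realization attains the claimed bound. The sample average of $N$ iid draws from $\Fc$ is automatically a uniform convex combination of at most $N$ points of $\Fc$, which will furnish the desired $\fbN$.

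Since $\fch$ lies in the closure of the convex hull, I would first fix $\varepsilon > 0$ and pick a finite convex combination $\fch_\varepsilon = \sum_{k=1}^M p_k f_k$ with each $f_k \in \Fc$ and $\norm{\fch - \fch_\varepsilon}_\Hc < \varepsilon$. I would then sample $\fb_1, \dots, \fb_N$ iid with $\P(\fb = f_k) = p_k$, giving $\E[\fb] = \fch_\varepsilon$ and $\E\norm{\fb}_\Hc^2 \leq b^2$. The Hilbert-space version of the iid variance identity already worked out in the appendix (kept sharp by retaining the $-\norm{\E[\fb]}_\Hc^2$ term rather than dropping it as in step (d) there) yields
\begin{equation*}
\E\norm{\fbN - \fch_\varepsilon}_\Hc^2 \;\leq\; \tfrac{1}{N}\bigl(b^2 - \norm{\fch_\varepsilon}_\Hc^2\bigr).
\end{equation*}
Decomposing $\fbN - \fch = (\fbN - \fch_\varepsilon) + (\fch_\varepsilon - \fch)$ and using $\E[\fbN - \fch_\varepsilon] = 0$ to annihilate the inner-product cross term then produces
\begin{equation*}
\E\norm{\fbN - \fch}_\Hc^2 \;\leq\; \tfrac{1}{N}\bigl(b^2 - \norm{\fch_\varepsilon}_\Hc^2\bigr) + \varepsilon^2.
\end{equation*}

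The strict hypothesis $\bar{c} > b^2 - \norm{\fch}_\Hc^2$ leaves positive slack $\delta := \bar{c} - (b^2 - \norm{\fch}_\Hc^2)$. Choosing $\varepsilon$ small enough that simultaneously $\norm{\fch_\varepsilon}_\Hc^2 \geq \norm{\fch}_\Hc^2 - \delta/2$ and $\varepsilon^2 < \delta/(2N)$ forces the right-hand side strictly below $\bar{c}/N$, so at least one realization of $(\fb_1, \dots, \fb_N)$ must satisfy $\norm{\fch - \fbN}_\Hc^2 \leq \bar{c}/N$. The main subtlety is precisely this closure-handling step: the probabilistic construction only directly targets means of points strictly inside the convex hull, so $\varepsilon$ must be tuned against the fixed $N$ and the slack $\delta$ so that the $\varepsilon^2$ approximation error does not swamp the $1/N$ variance bound. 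Everything else is routine Hilbert-space manipulation paralleling the identity already derived in the appendix.
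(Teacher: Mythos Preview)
Your proposal is correct and follows the same Maurey-style probabilistic averaging argument as the paper: approximate $\fch$ by a finite convex combination, sample iid from the induced distribution, bound the variance via the Hilbert-space identity in the appendix, and extract a good realization. The only cosmetic difference is that you expand $\E\norm{\fbN-\fch}_{\Hc}^2$ directly and use $\E[\fbN-\fch_\varepsilon]=0$ to annihilate the cross term, whereas the paper first selects a realization close to the finite approximant and then applies the triangle inequality on the unsquared norms; your bookkeeping is marginally cleaner, but the substance is identical.
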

\begin{proof}
Define $\fst$ as a finite but arbitrarily large convex combination of $m$ points
$\fst_i\in\Fc$, meaning it has the form $\fst = \sum_{i=1}^m\gamma_i\fst_i$ with $\gamma_i\geq0$, $\sum_{i=1}^m\gamma_i=1$, and such that
$\norm{\fch-\fst}_{\Hc}\leq\frac{\delta}{\sqrt{N}}$ for some $\delta>0$. Randomly sample
functions $\fb,\fb_1,\dots,\fb_N$ iid from the set $\{\fst_i,\dots,\fst_m\}$ according to
$\P[\fb=\fst_i]=\gamma_i$. Thus, we have $\E[\fbbN]=\E[\frac{1}{N}\sum_{i=1}^N\fb_i]=\E[\fb]
=\fst$, and then using \eqref{eq:appApx:varBnd} gives
$\E\norm{\fbbN-\fst}_{\Hc}^2=\frac{1}{N}(\E\norm{\fb}_{\Hc}^2 - \norm{\fst}_{\Hc}^2) \leq
\frac{1}{N}(b^2 - \norm{\fst}_{\Hc}^2)$. Since the expectation is bounded this way, there must
be a realization $\fbN$ that also satisfies this bound. Then by the triangle inequality, we have
$\norm{\fch-\fbN}_{\Hc} \leq \norm{\fch-\fst}_{\Hc} + \norm{\fbN-\fst}_{\Hc} =
\frac{1}{\sqrt{N}}(\delta + \sqrt{b^2 - \norm{\fst}_{\Hc}^2})$. And since $\fst$ can get
arbitrarily close to $\fch$ by letting $m\to\infty$, and thus $\delta\to0$ and
$\norm{\fst}_{\Hc}\to\norm{\fch\,}_{\Hc}$, this proves the result.
\end{proof}
In our setting, we use the $\normB{\cdot}$ norm for the $L_2(\Bc)$ space. We note that Theorem 1
of \cite{makovoz1996random} gives the same $O(\frac{1}{\sqrt{N}})$ result, but with a superior
constant. The proof there uses the same method as above, but breaks $\Fc$ into $N$ subsets of
diameter $\epsilon$, and then uses that to bound the variance terms. This gives the bound
constant in terms of an inverse covering number of $\Fc$ (denoted $\epsilon_N(\Fc)$), which goes
to zero as $N\to\infty$. However, this requires quantifying $\epsilon_N(\Fc)$, which may be
nontrivial.

\newpage
\section{Proof of Theorem~\ref{thm:apx:moll}}\label{app:apx:mollProof}
First, we observe that \eqref{eq:apx:baseIntRepLam} is equivalently
\begin{align}\nonumber
\fhN(x) \ = \ \sum_{i=1}^{N} \theta_i\,\sigma(\pariX)\,
\int_\Upsilonl \deltalno(\upgamma-\upgamma_i)\,\d\upgamma \ = \ 
\sum_{i=1}^{N} \theta_i\,\int_\Upsilonl \deltalno(\upgamma-\upgamma_i)\,\sigma(\pariX)\,
\d\upgamma \ ,
\end{align}
since $\int_\Upsilonl \deltalno(\upgamma-\upgamma_i)\,\d\upgamma=1$ holds for all $\lambda>0$, 
$n\geq1$, and $\upgamma_i\in\Upsilon$, and $\sigma(\pariX)$ is a constant with respect to
$\d\upgamma$.

Then we define the sets $\Upsilonil$ as the supports of each $\deltalno(\upgamma-\upgamma_i)$,
which is the open cube $(-\frac{1}{\lambda},\frac{1}{\lambda})^{n+1}$ centered at the 
corresponding $\upgamma_i\in\Upsilon$, and note that these are always contained within the
$\lambda$-expanded $\Upsilonl$. Therefore, for any $x\in\Bc$ we have
\begin{align}\nonumber
\big|\fhN(x) - \fhlN(x)\big| \ &=  \ 
\left| \, \sum_{i=1}^{N} \theta_i\,\int_\Upsilonl\deltalno(\upgamma-\upgamma_i)
\,\Big(\sigma(\pariX) - \sigma(\parX)\Big)\,\d\upgamma \, \right| 
\\\nonumber
&\overset{(a)}{\leq}\ \sum_{i=1}^{N} \, |\theta_i| \, \left| 
\int_\Upsilonl\deltalno(\upgamma-\upgamma_i)
\,\Big(\sigma(\pariX) - \sigma(\parX)\Big)\,\d\upgamma \, \right| 
\\\nonumber
&\overset{(b)}{\leq}\ \sum_{i=1}^{N} \, |\theta_i| \, 
\int_\Upsilonl\deltalno(\upgamma-\upgamma_i)
\,\left|\sigma(\pariX) - \sigma(\parX)\right|\,\d\upgamma \\\nonumber
&\overset{(c)}{\leq}\ \sum_{i=1}^{N} |\theta_i| \int_\Upsilonl\deltalno(\upgamma-\upgamma_i)
\sup_{\phi\in\Upsilonil}\left|\sigma(\pariX) - \sigma(\phi^\top X)\right|\d\upgamma
\\\nonumber
&\overset{(d)}{=}\ \sum_{i=1}^{N} |\theta_i| \sup_{\phi\in\Upsilonil}\left|\sigma(\pariX) -
\sigma(\phi^\top X)\right| \int_\Upsilonl\deltalno(\upgamma-\upgamma_i)\d\upgamma \\ \nonumber
&\overset{(e)}{=}\ \Sc_N \sup_{\phi\in\Upsilonil}\left|\sigma(\pariX) -
\sigma(\phi^\top X)\right|
\ \ .
\end{align}
Here (a) is by the triangle inequality, (b) is because $\left|\int
f(x)\,\d x\right|\leq\int|f(x)|\,\d x$ and $\deltalno$ is positive, (c) is because the
integrand is only nonzero on $\upgamma\in\Upsilonil$ by the support of $\deltalno$, (d) is 
because the sup is a constant and can thus be pulled out of the integral, and (e) is by the 
definition of the mollified delta such that 
$\int_\Upsilonl\deltalno(\upgamma-\upgamma_i)\d\upgamma=1$ for all $i\in[N]$ and the (assumed 
known) bound $\sum_{i=1}^N|\theta_i|\leq \Sc_N$.

If $\sigma$ satisfies the overall boundedness assumption with constant $\Dsig$, then we have
\begin{align}\nonumber
\big|\fhN(x) - \fhlN(x)\big| \ 
= \ \Sc_N \, \Dsig
\end{align}
and thus
\begin{align*}
\normB{\fhN(x)-\fhlN(x)} \ &\leq \ 
\sqrt{\int_\Bc \abs{\Sc_N \,\Dsig}^2\,\muBdx} \ = \ \Sc_N \,\Dsig
\end{align*}
where $\Sc_N$ and $\Dsig$ are positive constants.

Otherwise, $\sigma$ satisfies the Lipschitz assumption with constant $\Lsig$ and so we have
\begin{align}\nonumber
\big|\fhN(x) - \fhlN(x)\big| \ 
&\leq \ \Sc_N \, \Lsig
\sup_{\phi\in\Upsilonil}\left|(\upgamma_i -\phi)^\top X\, \right| \\ 
\label{eq:appApx:absDifffNflN}
&\overset{(f)}{\leq} \ \Sc_N \, \frac{\Lsig\,\sqrt{n+1}}{\lambda}\,\|X\|_2
\end{align}
where (f) uses the Cauchy-Schwarz inequality, combined with the fact that
$\|\upgamma_i-\phi\|_2\leq\frac{\sqrt{n+1}}{\lambda}$ for all $\phi\in\Upsilonil$ at any 
$\upgamma_i\in\Upsilonl$. And thus
\begin{align*}
\normB{\fhN(x)-\fhlN(x)} \ &\leq \
\sqrt{\int_\Bc \abs{\Sc_N \,\frac{\Lsig\,\sqrt{n+1}}{\lambda}\,\|X\|_2}^2\,\muBdx} \ = \ 
\Sc_N\ \frac{\Lsig\,\sqrt{n+1}}{\lambda}\ \EB
\end{align*}
where $\Sc_N$, $\Lsig$, $\sqrt{n+1}$, and $\lambda$ are all positive constants.
\vphantom{a}\hfill$\blacksquare$

\newpage

\section{Proof of Lemma~\ref{lem:apx:exp}}\label{app:apx:exp}
The mollified approximation $\fhlN$ has the integral approximation \eqref{eq:apx:mollApx}, which
can be equivalently written as
$$
\fhlN(x) := \int_\Upsilonl \gl(\upgamma_i)\ \sigma(\upgamma^\top
z)\,\d\upgamma
$$
where coefficient function $\gl$ is defined in \eqref{eq:apx:gLam}. Then with $\Pl$ being the
known nonzero density over $\Upsilonl$ used for sampling the parameters, we can equivalently 
write this as
\begin{equation}\label{eq:apx:intRep2}
\fhlN(x) = \int_\Upsilonl \frac{\gl(\upgamma)}{\Pl(\upgamma)}\,\sigma(\upgamma^\top z)
\,\Pl(\upgamma)\d\upgamma
= \Exp_{\upgammab\sim\Pl}\limits
\left[\frac{\gl(\upgammab)}{\Pl(\upgammab)}\,\sigma(\upgammab^\top z)\right]
\ \ .
\end{equation}
For any sampled parametervalues $\upgammab_1,\dots,\upgammab_R$ from $\Upsilonl$, let us denote
$\ftr^*$ as the random approximation that has coefficient values $\vartheta^*\in\R^R$ of
\begin{equation}\label{eq:appApx:cjOpt}
\vartheta_j^*\ =\ \frac{\gl(\upgammab_j)}{\Pl(\upgammab_j)\,R} \ 
=\ \frac{\sum_{i=1}^N\theta_i\,\deltalno(\upgammab_j-\upgamma_i)}{\Pl(\upgammab_j)\,R}
\hspace{40pt}\forall j\in[R] \ \ .
\end{equation}
Let us denote $\d\Pl(\upgamma_j):=\Pl(\upgamma_j)\,\d\upgamma_j$ for all $j\in[R]$, so that in 
the below integrals $\upgamma,\upgamma_1,\dots,\upgamma_R$ all represent separate variables of
integration. Then with the iid assumption on all $\upgammab_j\sim\Pl$, for this $\ftr^*$ we have
\begin{align*}
\Exp_{\upgammab_j\sim\Pl}\limits&\left[\ftr^*(x\,;\upgammab_1,\dots,\upgammab_R)\right]\  = \ 
\Exp_{\upgammab_j\sim\Pl}\limits\left[\sum_{j=1}^{R}\vartheta_j^*\,\sigma(\upgammab_j^\top 
z)\right] \\
&= \ \int_\Upsilonl\cdots\int_\Upsilonl\ \sum_{j=1}^R\frac{\gl(\upgamma_j)}{\Pl(\upgamma_j)\,R}\,
\sigma(\upgamma_j^\top z)\ \d\Pl(\upgamma_1)\,\cdots\,\d\Pl(\upgamma_R) \\
&\overset{(a)}{=}\ \int_\Upsilonl\cdots\int_\Upsilonl \frac{\gl(\upgamma_1)}{\Pl(\upgamma_1)\,R}\,
\sigma(\upgamma_1^\top z)\,\d\Pl(\upgamma_1)\,\cdots\,\d\Pl(\upgamma_R) \ + \ \cdots \\
& \hspace{100pt} + \ \int_\Upsilonl\cdots\int_\Upsilonl \frac{\gl(\upgamma_R)}{\Pl(\upgamma_R)\,R}\,
\sigma(\upgamma_R^\top z)\,\d\Pl(\upgamma_1)\,\cdots\,\d\Pl(\upgamma_R) \\
&\overset{(b)}{=}\ \int_\Upsilonl\frac{\gl(\upgamma_1)}{\Pl(\upgamma_1)\,R}\,
\sigma(\upgamma_1^\top z)\,\d\Pl(\upgamma_1)\  \big(1^{R-1}) \ + \ \cdots \\
& \hspace{150pt} + \int_\Upsilonl\frac{\gl(\upgamma_R)}{\Pl(\upgamma_R)\,R}\,
\sigma(\upgamma_R^\top z)\,\d\Pl(\upgamma_R)\ \big(1^{R-1}) \\
&\overset{(c)}{=}\ R \left(\frac{1}{R}\int_\Upsilonl \frac{\gl(\upgamma)}{\Pl(\upgamma)}\,
\sigma(\upgamma^\top z)\, \d\Pl(\upgamma) \right) \ \ 
\overset{(d)}{=} \ \fhlN(x) 
\ \ .
\end{align*}
Here, (a) is by the linearity of integration, (b) is each $\upgamma_j$ only belongs to the
$\d\Pl(\upgamma_j)$ integral, while the remaining $R-1$ integrals in each term are
$\int_{\Upsilonl}\d\Pl(\upgamma_j)=1$, (c) is because there are $R$ copies of the same integral, and
(d) is \eqref{eq:apx:intRep2}.
\vphantom{a}\hfill$\blacksquare$

\newpage
\section{Proof of Theorem~\ref{thm:apx:main}}\label{app:apx:mainProof}

Since $\fhR$ is assumed to satisfy Lemma~\ref{lem:apx:exp}, then by \eqref{eq:appApx:cjOpt}
it holds for all $j\in[R]$ that
\begin{equation}\label{eq:appApx:cMax}
|\vartheta_j|\ \leq\ \left|\frac{\gl(\upgammab_j)}{\Pl(\upgammab_j)\,R}\right| \ \leq\
\frac{\Cgl}{\pmin\,R} \ \ ,
\end{equation}
where $\Cgl$ is defined in \eqref{eq:apx:Cgl} and $\pmin:=\min\{\Pl(\upgamma)\ |\ 
\upgamma\in\Upsilonl\}$.

From the definition of $h_\Bc$ in \eqref{eq:apx:hB}, and for all points differing only at
\mbox{coordinate $j$} as 
$(\upgamma_1,\dots,\upgamma_j,\dots,\upgamma_R),(\upgamma_1,\dots,\upgammaw_j,\dots,
\upgamma_R)\in\Upsilon^R_\lambda$, we have for all $j\in[R]$ that
\begin{align}\nonumber
\big|h_\Bc(&\upgamma_1,\dots,\upgamma_j,\dots,\upgamma_R)- 
h_\Bc(\upgamma_1,\dots,\upgammaw_j,\dots,\upgamma_R)\big| \\ \nonumber
&=\ \Big|\,\normB{\fhlN(x) - \fhR(x\,;\upgamma_1,\dots,\upgamma_j,\dots,\upgamma_R)} 
- \normB{\fhlN(x) - \fhR(x\,;\upgamma_1,\dots,\upgammaw_j,\dots,\upgamma_R)}\,\Big|\\ 
\nonumber
&\overset{(a)}{\leq}\ \big\|\fhlN(x) - \fhR(x\,;\upgamma_1,\dots,\upgamma_j,\dots,\upgamma_R) 
- \fhlN(x) + \fhR(x\,;\upgamma_1,\dots,\upgammaw_j,\dots,\upgamma_R) \big\|_B 
\\\nonumber
&\overset{(b)}{=}\ \normB{\frac{\gl(\upgammaw_j)}{\Pl(\upgammaw_j)\,R}\,
\sigma(\upgammaw_j^\top X) - \frac{\gl(\upgamma_j)}{\Pl(\upgamma_j)\,R}\,
\sigma(\parjX) } \\[5pt] \nonumber
&\overset{(c)}{\leq}\ \abs{\frac{\gl(\upgammaw_j)}{\Pl(\upgammaw_j)\,R}}
\normB{\sigma(\upgammaw_j^\top X) - \sigma(\parjX)} + 
\bigg|\frac{\gl(\upgamma_j)}{\Pl(\upgamma_j)\,R}-\frac{\gl(\upgammaw_j)}{\Pl(\upgammaw_j)\,R}\bigg|
\normB{\sigma(\parjX)} \\[5pt] \nonumber
&\overset{(d)}{\leq}\ \frac{\Cgl}{\pmin\,R} \bigg(\normB{\sigmach(\upgammaw_j^\top X) -
\sigmach(\parjX)} + 2 \normB{\sigmach(\parjX)} + 2\abs{\sigma(0)} \bigg) 
\end{align}
Here, (a) is by the reverse triangle inequality
\eqref{eq:appApx:revTri}, (b) is by the definition of $\fhR$ such that only $\vartheta_j$ 
differs at
$\upgamma_j\neq\upgammaw_j$, (c) is by the inequality \eqref{eq:appApx:normDiffCoef}, (d) is by
noting that the $\theta_i$ of $\gl$ can be negative (thus the $2$), the absolute value bound
\eqref{eq:appApx:cMax}, and the definition of the shifted activation $\sigmach(u)= 
\sigma(u)-\sigma(0)$ for all $u\in\R$.

If $\sigma$ satisfies the overall boundedness condition of Assumption~\ref{sigAsmpt} with 
constant $\Dsig$, note that $|\sigmach(u)-\sigmach(v)|\leq\Dsig$ holds for all $u,v\in\Ic$
and thus $|\sigmach(u)-\sigmach(0)|=|\sigmach(u)|\leq\Dsig$ also holds for all $u\in\Ic$. And 
so 
we have
\begin{align}\nonumber
\big|h_\Bc(&\upgamma_1,\dots,\upgamma_j,\dots,\upgamma_R)- 
h_\Bc(\upgamma_1,\dots,\upgammaw_j,\dots,\upgamma_R)\big| \
\leq\ \frac{\Dsig\Cgl}{\pmin\,R}\bigg(3 + 2\frac{|\sigma(0)|}{\Dsig}\bigg) \ \ .
\end{align}

Otherwise, $\sigma$ satisfies the Lipschitz condition of Assumption~\ref{sigAsmpt} with 
constant 
$\Lsig$ and so $|\sigmach(u)-\sigmach(v)|\leq\Lsig|u-v|$ holds for all $u,v\in\Ic$
and thus $|\sigmach(u)-\sigmach(0)|=|\sigmach(u)|\leq\Lsig|u|$ also holds for all $u\in\Ic$.  
And so we have
\begin{align}\nonumber
\big|h_\Bc(\upgamma_1,\dots,\upgamma_j,\dots,\upgamma_R)- 
&h_\Bc(\upgamma_1,\dots,\upgammaw_j,\dots,\upgamma_R)\big| \\ \nonumber
&\leq\ \frac{\Lsig\Cgl}{\pmin\,R}\bigg(\normB{(\upgammaw_j - \upgamma_j)^\top X }
+ 2\normB{\parjX} + 2\frac{|\sigma(0)|}{\Lsig}\bigg) \\ \nonumber
&\overset{(e)}{\leq}\ \frac{\Lsig\Cgl}{\pmin\,R}\,\bigg((\Dt+2\upmax)\,\EB + 
2\frac{|\sigma(0)|}{\Lsig}\bigg)
\end{align}
where (e) is by Cauchy-Schwarz and the definitions of $\Dt$, $\upmax$, and $\EB$.

Next, recall from Lemma~\ref{lem:apx:exp} that in order for 
$
\Exp_{\upgammab_j\sim\Pl}\limits\left[\fhR(x\,;\upgammab_1,\dots,\upgammab_R)\right] = \fhlN(x)
$
to hold, it must be that
$$
\fhR(x\,;\upgammab_1,\dots,\upgammab_R) \ = \
\frac{1}{R}\sum_{j=1}^R\,\frac{\gl(\upgammab_j)}{\Pl(\upgammab_j)}\sigma(\upgammab_j^\top z)\ \ 
.
$$
We define a class of functions $\fs:\R^n\to\R$ using the activation function $\sigma:\R\to\R$, 
over the values of $\upgamma\in\Upsilonl$ as
\begin{equation*}
\Fc \ := \ \Big\{\fs(x\,;\upgamma)= \frac{\gl(\upgamma)}{\Pl(\upgamma)}\,
\sigma(\parjX) \ \Big| \ \upgamma\in\Upsilonl \Big\} \ \ .
\end{equation*}
Thus, it holds that 
$$
\fhR(x\,;\upgammab_1,\dots,\upgammab_R) \ = \ \frac{1}{R}\sum_{j=1}^R\,\fs(x\,;\upgammab_j)
$$
with $\upgammab_1,\dots,\upgammab_R$ sampled iid according to $\Pl$, and so the inequality
\eqref{eq:appApx:varBnd} applies.

And so, ith $h_\Bc$ as defined in \eqref{eq:apx:hB} we have
\begin{align}\nonumber
\Exp_{\upgammab_j\sim\Pl}\limits\left[h_\Bc(\upgammab_1,\dots,\upgammab_R)^2\,\right] 
\ &= \ \Exp_{\upgammab_j\sim\Pl}\limits\normB{\,\fhlN(x) - 
\fhR(x\,;\upgammab_1,\dots,\upgammab_R)}^2
\\\nonumber
&=\ \Exp_{\upgammab_j\sim\Pl}\limits \normB{\,\fhR(x\,;\upgammab_1,\dots,\upgammab_R)
-\Exp_{\upgammab_j\sim\Pl}\limits\left[\fhR(x\,;\upgammab_1,\dots,\upgammab_R)\right]}^2 \\ 
\nonumber
&\overset{(a)}{\leq}\ \frac{1}{R}\Exp_{\upgammab\sim\Pl}\limits
\normB{\frac{\gl(\upgammab)}{\Pl(\upgammab)}\sigma(\parbjX)}^2 \\ \label{eq:appApx:expHB1}
&\leq\ \frac{\Cgl^2}{\pmin^2\,R}\,\Exp_{\upgammab\sim\Pl}\limits\normB{\sigmach(\parbjX)
+ \sigma(0)}^2 \ \ .
\end{align}
Here, (a) is the inequality \eqref{eq:appApx:varBnd}.  

Note that $\sqrt{\Exp_{\upgammab\sim\Pl}\normB{f(x\,;\upgammab)}^2\,}$ defines a norm over such
randomly parameterized functions. Using the square root of \eqref{eq:appApx:expHB1} then gives
\begin{align*}\nonumber
\Exp_{\upgammab_j\sim\Pl}\limits\left[h_\Bc(\upgammab_1,\dots,\upgammab_R)\right] \ &= \ 
\Exp_{\upgammab_j\sim\Pl}\limits\left[\sqrt{h_\Bc(\upgammab_1,\dots,\upgammab_R)^2\,}\,\right] 
\\
\nonumber
&\overset{(a)}{\leq}\ 
\sqrt{\Exp_{\upgammab_j\sim\Pl}\limits\left[h_\Bc(\upgammab_1,\dots,\upgammab_R)^2\,\right]}
\\\nonumber
&\leq\ \frac{\Cgl}{\pmin\,\sqrt{R}\,}\,\sqrt{\Exp_{\upgammab\sim\Pl}\limits
\normB{\sigmach(\parbjX) + \sigma(0)}^2} \\ \nonumber
&\overset{(b)}{\leq}\ \frac{\Cgl}{\pmin\,\sqrt{R}\,}
\left(\sqrt{\Exp_{\upgammab\sim\Pl}\limits
\normB{\sigmach(\parbjX)}^2\,} + |\sigma(0)|\right) \ \ .
\end{align*}
Here, (a) is Jensen's inequality and (b) is the triangle inequality.

If $\sigma$ satisfies the overall boundedness condition of Assumption~\ref{sigAsmpt} with 
constant $\Dsig$,
\begin{align*}\nonumber
\Exp_{\upgammab_j\sim\Pl}\limits\left[h_\Bc(\upgammab_1,\dots,\upgammab_R)\right] 
\ \leq \ 
\frac{\Dsig\,\Cgl}{\pmin\,\sqrt{R}\,}\,
\left(1 + \frac{|\sigma(0)|}{\Dsig}\right)
\end{align*}
then holds. Otherwise, $\sigma$ satisfies the Lipschitz condition with constant $\Lsig$,
\begin{align*}\nonumber
\Exp_{\upgammab_j\sim\Pl}\limits\left[h_\Bc(\upgammab_1,\dots,\upgammab_R)\right] \  
&\leq\ \frac{\Cgl}{\pmin\,\sqrt{R}\,}\left(\Lsig 
\sqrt{\Exp_{\upgammab\sim\Pl}\limits
\normB{\parbjX}^2\,} + |\sigma(0)|\right) \\ \nonumber
&\overset{(c)}{\leq}\ \frac{\Lsig\,\Cgl}{\pmin\,\sqrt{R}\,}\,
\left(\upmax\EB + \frac{|\sigma(0)|}{\Lsig}\right)
\end{align*}
then holds, where (c) is Cauchy-Schwarz and the definitions of $\upmax$ and $\EB$.

Applying McDiarmid's inequality (see Appendix~\ref{app:apx:useful}) with $h_\Bc$ gives
$$
\P\left[\,\Big|h_\Bc(\upgammab_1,\dots,\upgammab_R)-\Exp\big[h_\Bc(\upgammab_1,\dots,\upgammab_R)\big]
\Big|\geq t \right] \ \leq \ 
\exp\left(\frac{-2\,t^2}{\sum_{j=1}^R k_j^2}\right) \ \ .
$$
Above we have shown that the $k_j$ bounds for all $j\in[R]$ are
$$
k_j \ = \
\frac{\Cgl}{\pmin\,R}\,A
$$
and the expected value bound is
$$
\Exp\big[h_\Bc(\upgammab_1,\dots,\upgammab_R)\big] \ \leq \ 
\frac{\Cgl}{\pmin\,\sqrt{R}\,}\,B
$$
for coefficients $A$ and $B$ that depend on properties of the activation function $\sigma$ and 
the sets $\Bc$ and $\Upsilonl$.
Therefore, we have for any $t>0$, that
\begin{align}\nonumber
\P\Bigg[\,\Bigg|h_\Bc(\upgammab_1,\dots,\upgammab_R) &-
\frac{\Cgl}{\pmin\,\sqrt{R}\,}\,B\bigg|
\geq t \Bigg] \\ \nonumber
&\leq \
\P\Big[\,\big|h_\Bc(\upgammab_1,\dots,\upgammab_R)-\Exp\big[h_\Bc(\upgammab_1,\dots,\upgammab_R)\big]
\big|\geq t\Big]  \\ \label{eq:appApx:McDB}
&\leq\ \exp\left(\frac{-2\,t^2}{R\left(\frac{\Cgl}{\pmin\,R}\,A\right)^2} \right) \ = 
\ \exp\left(\frac{-2\,\pmin^2\,R\,t^2}{\Cgl^2\,A^2} \right) \ .
\end{align}
Setting the RHS of \eqref{eq:appApx:McDB} equal to $\nu\in(0,1)$ and solving for $t$ then gives
\begin{equation}\label{eq:appApx:tVal}
\log\left(\frac{1}{\nu}\right) = \frac{2\,\pmin^2\,R\,t^2}{\Cgl^2\,A^2}
\quad\implies\quad
t \ = \ \frac{\Cgl\,A}{\pmin\,\sqrt{R}}\,
\sqrt{\frac{1}{2}\log\left(\frac{1}{\nu}\right)} \ \ .
\end{equation}
Combining \eqref{eq:appApx:McDB} and \eqref{eq:appApx:tVal} gives the result, for any
$\nu\in(0,1)$, as the complement of
\begin{align*}
&\P\Bigg[\,h_\Bc(\upgammab_1,\dots,\upgammab_R) \ \geq \ 
\frac{\Cgl}{\pmin\,\sqrt{R}\,}\,B \ + \ 
\frac{\Cgl\,A}{\pmin\,\sqrt{R}}\,\sqrt{\frac{1}{2}\log\left(\frac{1}{\nu}\right)}\ \Bigg]
\\
&=\ \P\Bigg[\,h_\Bc(\upgammab_1,\dots,\upgammab_R) \ \geq \ 
\frac{\Cgl}{\pmin\,\sqrt{R}\,}\,\left(B +
A\,\sqrt{\frac{1}{2}\log\left(\frac{1}{\nu}\right)}\ \right)\Bigg] \\
i)\hspace{10pt}
&=\ \P\Bigg[\,h_\Bc(\upgammab_1,\dots,\upgammab_R) \ \geq \ 
\frac{\Dsig\,\Cgl}{\pmin\,\sqrt{R}\,}\,\KD(\nu)\Bigg] \ \leq \ \nu \\
ii)\hspace{10pt}
&=\ \P\Bigg[\,h_\Bc(\upgammab_1,\dots,\upgammab_R) \ \geq \ 
\frac{\Lsig\,\Cgl}{\pmin\,\sqrt{R}\,}\,\KL(\nu)\Bigg] \ \leq \ \nu
\end{align*}
where $i)$ and $ii)$ correspond to which boundedness property of Assumption~\ref{sigAsmpt} the 
activation function $\sigma$ satisfies.

And so, we have the coefficients $\KD$ and $\KL$ defined in terms of the selected $\nu$
as:
\begin{align*}
\KD(\nu)\ :=& 
\ \ 1 + \frac{|\sigma(0)|}{\Dsig} +  
\bigg(3 + 2\frac{|\sigma(0)|}{\Dsig}\bigg) \sqrt{\frac{1}{2}\log\left(\frac{1}{\nu}\right)}\\
\KL(\nu)\ :=& 
\ \ \upmax\EB + \frac{|\sigma(0)|}{\Lsig} + \bigg((\Dt+2\upmax)\,\EB + 
2\frac{|\sigma(0)|}{\Lsig}\bigg) \sqrt{\frac{1}{2}\log\left(\frac{1}{\nu}\right)} \ \ .
\end{align*}

Now by applying the triangle inequality, we then have 
\begin{align*}
\normB{\fch(x)-\fhR(x\,;\upgammab_1,\dots,\upgammab_R)} \ \leq \ 
&\normB{\fch(x)-\fhN(x)} + \normB{\fhN(x)-\fhlN(x)} \\
&\hspace{40pt} +\normB{\fhlN(x)-\fhR(x\,;\upgammab_1,\dots,\upgammab_R)} \ .
\end{align*}
The first term is assumed bounded by $\epsbase$ for the base approximation $\fhN$. The second 
term is bounded by Theorem~\ref{thm:apx:moll} and is discussed in Section~\ref{sec:apx:moll}. 
And the third term is bounded above.

We pull out front the activation function constant $\Dsig$ or $\Lsig$ and the base approximation
coefficient size bound $\sum_{i=1}^N|\theta_i|\leq \Sc_N$, since they are common to the last two 
terms. This follows for the last term because
$$
\frac{\Cgl}{\pmin\,R}\ =\ \sum_{i=1}^N|\theta_i|\left(\frac{\eta\lambda}{e}\right)^{n+1}
\frac{1}{\pmin\,R} \ \leq \ \Sc_N\,\frac{\lambda^{n+1}}{R}\ \frac{\eta^{n+1}}{e^{n+1}\,\pmin}\ \ 
,
$$
and so the remaining $\frac{\eta^{n+1}}{e^{n+1}\pmin}$ is added to the existing
coefficient in terms of $\nu$.
\vphantom{a}\hfill$\blacksquare$


\newpage
\section{Proof of Lemma~\ref{lem:apx:extLinf}}\label{app:apx:lemLinfProof}

The Lipschitz constants of $\fch$ and $\fh$ are respectively
$\widecheck\Lc=\Lc+\norm{\nabla\!f(0_n)}_2$ and $\widehat\Lc$. Then, the approximation error
function $\fw=\fch-\fh$ is also Lipschitz, satisfying
\begin{align*}
\abs{\fw(x)-\fw(y)} = \abs{\fch(x)-\fh(x)-\fch(y)+\fh(y)} &\leq 
\abs{\fch(x)-\fch(y)} + \abs{\fh(x)-\fh(y)} \\
&\leq \ (\widecheck\Lc + \widehat\Lc\,)\norm{x-y}_2
\end{align*}
for all $x,y\in\Kc$. And so we define $\Lcw:=\widecheck\Lc + \widehat\Lc$ as the Lipschitz
constant.

We note here that 
$$
\mu\big(B_x(\rho)\big) \ = \ \rho^n\frac{\pi^{\frac{n}{2}}}{\Gamma(\frac{n}{2}+1)}
$$
is the Lebesgue measure (volume) of the $n$-dimensional ball of radius $\rho$ centered at any
$x\in\R^n$.
Let us define for any $h\geq0$ the set
$$
\Omega_h := \left\{x\in\Kc \ \ \Big|\ \ \big|\fw(x)| \ \geq \ h
\, \right\} 
$$
and note that the following therefore holds:
\begin{align*}
\epsbase^2 \geq \int_\Kc|\fw(x)|^2\,\mubKcdx \geq \int_{\Omega_h}|\fw(x)|^2\,\mubKcdx \geq
&\int_{\Omega_h} h^2\,\mubKcdx = h^2\mubKc(\Omega_h)\\
&\implies \ 
\mubKc(\Omega_h) \ \leq \ \frac{\epsbase^2}{h^2} \ \ .
\end{align*}
Now, let there be a $\xh\in\Kc$ satisfying $|\fw(\xh)|= h$ for some finite
$h > \epsbase\,$, and let us define the set
$$
S_d\ := \ \left\{x\in\Kc\ \ \Big| \ \ \norm{\xh-x}_2\leq\frac{h}{d\Lcw}\,\right\}
\ = \ \Kc\medcap B_{\xh}\!\left(\frac{h}{d\Lcw}\right)
$$
for any $d\geq\min\{1,\frac{h}{\Lcw\Dc}\}$, which respectively satisfies
$\norm{\xh-x}_2\leq\frac{h}{\Lcw}$ and $\norm{\xh-x}_2\leq\Dc$. We then get that
\begin{align*}
|\fw(x)| \ = \ |\fw(\xh)-\fw(\xh)+\fw(x)| \ &\geq \ |\fw(\xh)| - |\fw(\xh)-\fw(x)| \\
&\geq \ |\fw(\xh)| - \Lcw\,\norm{\xh-x}_2 \ \geq \ h - \Lcw\,\frac{h}{d\Lcw} \ = \
\frac{d-1}{d}h
\end{align*}
for all $x\in S_d$, by using the reverse triangle inequality.

If $\xh$ is such that $S_d$ is entirely within the interior of $\Kc$, then the intersection is
the entire ball and we thus have
$$
\mubKc(S_d) \ = \ \mubKc\!\left(B_{\xh}\!\left(\frac{h}{d\Lcw}\right)\right) \ = \ 
\frac{1}{\Vc}\left(\frac{h}{d\Lcw}\right)^n\frac{\pi^{\frac{n}{2}}}{\Gamma(\frac{n}{2}+1)} \ \ .
$$
As $\xh$ moves toward a flat surface boundary of $\Kc$, then only half of the ball remains in 
the intersection, which is the limit in the $n=1$ case. But in higher dimensions $n\geq2$, the
compact convex geometry of $\Kc$ means reductions of more than half are possible. 

Figure~\ref{fig:apx:Kgeo} uses an $n=2$ example to illustrate visually of how $\xh$ being
located on a convex curved surface or convex vertex of intersecting lines gives further
reductions compared to the flat surface.
\begin{figure}[H]
\centering
\begin{tikzpicture}
  \def\p{2}
  \def\D{1.2*\p}
  \def\th{90}

  \node[] at (0.95*\D,0) {$\Kc$};

  \fill [green!30] (0,0) --  (\th:\p) arc(\th:-\th:\p) -- cycle;

  \draw[dashed]
  (0,0) node[left] {$\xh$} coordinate (x) -- (0,1.1*\p) coordinate (top);
  \draw[dashed] (0,0) -- (0,-1.1*\p);

  \draw[] (0,0) circle (\p);
  \node[] at (-0.4*\p,0.5*\p) {$B_{\xh}(\frac{h}{d\Lcw})$};
  \draw[fill] (0,0) circle (.05);
  \node[] at (0.5*\p,0) {$S_d$};
\end{tikzpicture}
\hspace{10pt}
\raisebox{60pt}{$\geq$}
\hspace{10pt}
\begin{tikzpicture}
  \def\p{2}
  \def\D{1.2*\p}
  \def\r{1.8}

  \begin{scope}
    \clip(0,0) circle (\p);
    \fill[green!30](\r,0) circle (\r);
  \end{scope}

  \draw[dashed](0,0) arc(180:100:\r);
  \draw[dashed](0,0) arc(180:260:\r);

  \node[] at (0.95*\D,0) {$\Kc$};

  \node[] at (-0.25,0) {$\xh$};

  \draw[opacity=0] (0,0) -- (0,1.1*\p);
  \draw[opacity=0] (0,0) -- (0,-1.1*\p);

  \draw[] (0,0) circle (\p);
  \node[] at (-0.4*\p,0.5*\p) {$B_{\xh}(\frac{h}{d\Lcw})$};
  \draw[fill] (0,0) circle (.05);
  \node[] at (0.5*\p,0) {$S_d$};
\end{tikzpicture}
\hspace{10pt}
\raisebox{60pt}{$\geq$}
\hspace{10pt}
\begin{tikzpicture}
  \def\p{2}
  \def\D{1.2*\p}
  \def\th{45}

  \node[] at (0.95*\D,0) {$\Kc$};

  \fill [green!30] (0,0) --  (\th:\p) arc(\th:-\th:\p) -- cycle;

  \draw[dashed] 
  (0,0) node[left] {$\xh$} coordinate (x) -- ($({1.1*\p*cos(\th)},{1.1*\p*sin(\th)})$);
  \draw[dashed] (0,0) -- ($({1.1*\p*cos(\th)},-{1.1*\p*sin(\th)})$);

  \draw[opacity=0] (0,0) -- (0,1.1*\p);
  \draw[opacity=0] (0,0) -- (0,-1.1*\p);

  \draw[] (0,0) circle (\p);
  \node[] at (-0.4*\p,0.5*\p) {$B_{\xh}(\frac{h}{d\Lcw})$};
  \draw[fill] (0,0) circle (.05);
  \node[] at (0.6*\p,0) {$S_d$};
\end{tikzpicture}
\caption{\label{fig:apx:Kgeo} The intersection $\Kc\medcap 
B_{\xh}\!\left(\frac{h}{d\Lcw}\right)$ in green, for possible convex geometrical features on the 
boundary of $\Kc$ where the center $\xh$ can lie.}
\end{figure}

And so, there must exist at least one minimizing boundary location satisfying
$$
\mubKc(S_d)^\star \ := \ 
\min_{x\in\partial\Kc}\mubKc(S_d) \ = \ 
\frac{\ps_\Kc}{\Vc}\left(\frac{h}{d\Lcw}\right)^n\frac{\pi^{\frac{n}{2}}}{\Gamma(\frac{n}{2}+1)}
$$
for some $0<\ps_\Kc<\frac{1}{2}$, since $\Vc>0$ and the boundary of $\Kc$ must contain convex 
curved surfaces and/or vertices.

The exact portion $\ps_\Kc$ of the ball 
contained within $\mubKc(S_d)^\star$ may not be practical to calculate for a given $\Kc$, but we
can always lower bound it in the following manner (based on methods in
\cite{lamperski2021projected} Proof of Lemma 16, Case 1).

Denote $c_\Kc$ as the centroid of $\Kc$ and let $r$ be the radius of the largest ball centered
at $c_\Kc$ contained within $\Kc$. Then define $\ell\geq r$ as the distance from the centroid to 
the closest minimizing boundary location $x^\star$, and align a coordinate system at
$c_\Kc$ such that $x^\star$ is located at position $\big[-\ell\ \ 0\ \cdots\ 0\big]^\top$. Then
$x\in\Kc$ satisfying
$$
-\ell \ \leq x_1 \ \leq \ 0 \qquad \text{ and } \qquad
\sqrt{\sum_{i=2}^{n}x_i^2\,} \ \leq \ r + \frac{r}{\ell\,}\,x_1
$$
defines a conic section which is entirely within $\Kc$. If we then define
$$
\vartheta := \arctan\left(\frac{r}{\ell\,}\right)
\qquad\text{and}\qquad
\dscr + \dscr\sin(\vartheta) = \frac{h}{d\Lcw} \ \ ,
$$
this allows a ball of radius $\dscr\sin(\vartheta)$ to be inscribed within the conic section at
location $\big[-\ell+\dscr\ \ 0\ \cdots\ 0\big]^\top$.

Using the above definitions, we have that this radius satisfies
$$
\dscr\sin(\vartheta) \ = \ \frac{\frac{h}{d\Lcw}}{1+\sin(\vartheta)}\sin(\vartheta) \ = \ 
\frac{\frac{h}{d\Lcw}}{1 + \frac{r}{\sqrt{r^2 + \ell^2}\,}}\,\frac{r}{\sqrt{r^2 + 
\ell^2}\,}
\ = \ \frac{r}{r+\sqrt{r^2+\ell^2}}\,\frac{h}{d\Lcw} \ =: \ \varrho_\Kc\,\frac{h}{d\Lcw}
$$
and since the centroid is within the interior of $\Kc$, the factor $\varrho$ must satisfy
$$
\frac{r}{r+\sqrt{r^2+\Dc^2}\,} \ < \ \varrho_\Kc \ < \ \frac{r}{r+\sqrt{r^2 + r^2}\,} = 
\frac{1}{1+\sqrt{2}} \ \ .
$$
Figure~\ref{fig:apx:centroidK} illustrates an example of this method in $n=2$, with the 
inscribed ball outlined in red.
\begin{figure}[H]
\centering
\begin{tikzpicture}
  \def\r{1.6}
  \def\p{2}
  \def\th{45}
  \def\thC{20}
  \def\c{4.5}
  \def\D{7}
  \def\Dh{1.7}
  \def\DD{8}

  \fill [green!30] (0,0) --  (\th:\p) arc(\th:-\th:\p) -- cycle;

  \draw[dashed] (0,0) node[left] {$x^\star$} coordinate (x) --
  ($({1.1*\p*cos(\th)},{1.1*\p*sin(\th)})$) -- (\D,\Dh); 
  \draw[dashed] (0,0) --
  ($({1.1*\p*cos(\th)},-{1.1*\p*sin(\th)})$) -- (\D,-\Dh); 
  \draw[dashed] (\D,\Dh) arc(90:-90:\Dh);

  \fill [red!20] (0,0) --  (\thC:\p) arc(\thC:-\thC:\p) -- cycle;

  \draw[dotted] (\c,0) circle (\r);

  \draw[] (0,0) circle (\p);
  \node[] at (-0.4*\p,0.5*\p) {$B_{x^\star}(\frac{h}{d\Lcw})$};
  \draw[fill] (0,0) circle (.05);
  \node[] at (1.4,1.0) {$S_d$};

  \draw[] (0,0) -- (\c,\r) -- node[right] {$r$} (\c,0) -- (\c,-\r);
  \draw[] (0,0) -- (\c,-\r);

  \draw[-] (0,0) -- node[below right] {$\ell$} (\c,0);

  \node[] at (0.8,0.15) {$\vartheta$};

  \draw[fill] (\c,0) circle (.05);
  \node[] at (\c+.3,-0.25) {$c_\Kc$};

  \node[] at (7.3,1.1) {$\Kc$};

  \draw [thick,red] ($({\p*sqrt(\r*\r +\c*\c)/(\r+sqrt(\r*\r
    +\c*\c))},0)$) circle ({\p * \r/(\r + sqrt(\r*\r
    +\c*\c))});
  \draw[fill] ($({\p*sqrt(\r*\r +\c*\c)/(\r+sqrt(\r*\r
    +\c*\c))},0)$) node[below] {$\dscr$} circle (.05);
\end{tikzpicture}
\caption{\label{fig:apx:centroidK} The largest ball which always lower bounds $S_d$, centered at 
$\dscr$ with radius $\dscr\sin(\vartheta)$.}
\end{figure}
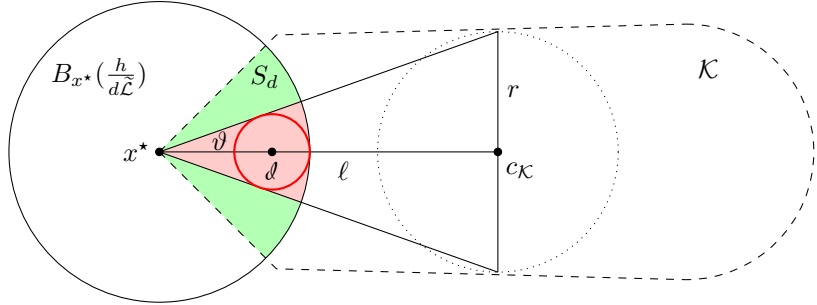

Thus, as $\xh$ moves toward such a minimizing boundary point of $\Kc$, we get the lower bound
$$
\mubKc(S_d) \ \geq \ 
\mubKc(S_d)^\star
\ \geq \ \ 
\frac{\varrho_\Kc^n}{\Vc}\left(\frac{h}{d\Lcw}\right)^{\!n}\frac{\pi^{\frac{n}{2}}}{\Gamma(\frac{n}{2}+1)}
\ =: \ \frac{\rho_{\Kc,n}}{\Vc}\left(\frac{h}{d\Lcw}\right)^{\!n}
\ \ .
$$
On the other hand, there may be $x\in\Kc$ more than $\frac{h}{d\Lcw}$ away from any $\xh$ where
it still holds that $|\fw(x)|\geq\frac{d-1}{d}h$. Thus,
$$
S_d \subseteq\Omega_{\frac{d-1}{d}h}
\quad\implies\quad
\mubKc(S_d) \ \leq \ \mubKc\left(\Omega_{\frac{d-1}{d}h}\right)
\ \leq \ 
\frac{d^2\epsbase^2}{(d-1)^2h^2}
\ \ .
$$
Combining these relations gives that
$$
\frac{\rho_{\Kc,n}}{\Vc}\,\left(\frac{h}{d\Lcw}\right)^n \ \leq \ \mubKc(S_d) \ \leq \ 
\frac{d^2\epsbase^2}{(d-1)^2h^2}
\qquad\implies\qquad
h\ \leq \ \frac{d}{(d-1)^{\frac{2}{n+2}}}\,
\left(\frac{\Vc\Lcw^n\epsbase^2}{\rho_{\Kc,n}}\right)^{\frac{1}{n+2}}
$$
must hold for any $d\geq1$. Then since
$$
\frac{\d}{\d d}\frac{d}{(d-1)^{\frac{2}{n+2}}} \ = \ 
\frac{n(d-1)-2}{(n+2)(d-1)^{\frac{n+4}{n+2}}} = 0
\quad\implies\quad
d \ = \ \frac{n+2}{n} \ \ ,
$$
we have the smallest upper bound on $h$ as
$$
h\ \leq \ \frac{\frac{n+2}{n}}{\frac{2}{n}^{\frac{1}{n+2}}}
\left(\frac{\Vc\Lcw^n\epsbase^2}{\rho_{\Kc,n}}\right)^{\frac{1}{n+2}}
\ \leq \ 2\,(\rho_{\Kc,n})^{\frac{-1}{n+2}}\left(\Vc\Lcw^n\epsbase^2\right)^{\frac{1}{n+2}}
\ \ .
$$
By definition, we have
$$
(\rho_{\Kc,n})^{\frac{-1}{n+2}} \ = \
\left(\varrho_\Kc^n\,\frac{\pi^{\frac{n}{2}}}{\Gamma(\frac{n}{2}+1)}\right)^{\!\frac{-1}{n+2}}
\ = \ 
\left(\frac{r}{r+\sqrt{r^2+\ell^2}}\right)^{\!\frac{-n}{n+2}}
\left(\frac{\pi^{\frac{n}{2}}}{\Gamma(\frac{n}{2}+1)}\right)^{\!\frac{-1}{n+2}}
$$
and since $\Kc\subseteq B_0(\rho)$ for sufficiently large $\rho$, then
$$
\Vc \ \leq \ \rho^n\frac{\pi^{\frac{n}{2}}}{\Gamma(\frac{n}{2}+1)} \ \ .
$$
Thus,
$$
(\rho_{\Kc,n})^{\frac{-1}{n+2}}\,\Vc^{\frac{1}{n+2}} \ \leq \ 
\left(\frac{r}{r+\sqrt{r^2+\Dc^2}}\right)^{\!\frac{-n}{n+2}}\,\Dc^{\frac{n}{n+2}} \ \ .
$$
\vphantom{a}\hfill$\blacksquare$

\fi

\end{document}